\def\NZQ{\Bbb}               
\def\NN{{\NZQ N}}
\def\ZZ{{\NZQ Z}}
\def\RR{{\NZQ R}}
\def\frk{\frak}               
\def\mm{{\frk m}}
\def\Phi{{\frk n}}
\def\Phi{{\frk N}}
\def\opn#1#2{\def#1{\operatorname{#2}}} 
\opn\chara{char}
\opn\length{\ell}
\opn\pd{pd}
\opn\rk{rk}
\opn\projdim{proj\,dim}
\opn\injdim{inj\,dim}
\opn\rank{rank}
\opn\depth{depth}
\opn\and{and}
\opn\grade{grade}
\opn\height{height}
\opn\embdim{emb\,dim}
\opn\codim{codim}
\opn\Tr{Tr}
\opn\bigrank{big\,rank}
\opn\superheight{superheight}\opn\lcm{lcm}
\opn\trdeg{tr\,deg}%
\opn\reg{reg}
\opn\lreg{lreg}
\opn\ini{in}
\opn\infpt{infpt}
\opn\Min{Min}
\opn\Ass{Ass}
\opn\div{div}
\opn\Div{Div}
\opn\cl{cl}
\opn\Cl{Cl}
\opn\Spec{Spec}
\opn\Supp{Supp}
\opn\supp{supp}
\opn\Sing{Sing}
\opn\Ass{Ass}
\opn\Ann{Ann}
\opn\Rad{Rad}
\opn\Soc{Soc}
\opn\Ker{Ker}
\opn\Coker{Coker}
\opn\Am{Am}
\opn\Hom{Hom}
\opn\Tor{Tor}
\opn\Ext{Ext}
\opn\End{End}
\opn\Aut{Aut}
\opn\id{id}
\opn\nat{nat}
\opn\pff{pf}
\opn\Pf{Pf}
\opn\GL{GL}
\opn\SL{SL}
\opn\mod{mod}
\opn\ord{ord}
\opn\cl{cl}
\opn\conv{conv}
\opn\ext{ext}
\opn\aff{aff}
\opn\con{conv}
\opn\relint{relint}
\opn\st{st}
\opn\lk{lk}
\opn\cn{cn}
\opn\core{core}
\opn\vol{vol}
\opn\link{link}
\opn\star{star}
\opn\gr{gr}
\def\pot#1#2{#1[\kern-0.28ex[#2]\kern-0.28ex]}
\opn\dirlim{\underrightarrow{\lim}}
\opn\inivlim{\underleftarrow{\lim}}
\let\iso=\cong
\let\Dirsum=\bigoplus
\newcommand{\ps}{\hspace{0.25ex}}
\let\to=\rightarrow
\def\Implies{\ifmmode\Longrightarrow \else
     \unskip${}\Longrightarrow{}$\ignorespaces\fi}
\def\implies{\ifmmode\Rightarrow \else
     \unskip${}\Rightarrow{}$\ignorespaces\fi}
\def\iff{\ifmmode\Longleftrightarrow \else
     \unskip${}\Longleftrightarrow{}$\ignorespaces\fi}
\newtheorem{Theorem}{Theorem}[section]
\newtheorem{Lemma}[Theorem]{Lemma}
\newtheorem{Corollary}[Theorem]{Corollary}
\newtheorem{Proposition}[Theorem]{Proposition}
\let\epsilon\varepsilon
\let\phi=\varphi
\let\kappa=\varkappa
\def\qed{\ifhmode\textqed\fi
   \ifmmode\ifinner\quad\qedsymbol\else\dispqed\fi\fi}
\def\textqed{\unskip\nobreak\penalty50
    \hskip2em\hbox{}\nobreak\hfil\qedsymbol
    \parfillskip=0pt \finalhyphendemerits=0}
\def\dispqed{\rlap{\qquad\qedsymbol}}
\opn\dis{dis}
\def\pnt{{\raise0.5mm\hbox{\large\bf.}}}
\begin{document}

\title{Linear balls and the multiplicity conjecture}
\author{Takayuki Hibi and Pooja Singla }
\address{Takayuki Hibi, Department of Pure and Applied Mathematics,
Graduate School of Information Science and Technology,
Osaka University, Toyonaka, Osaka 560-0043, Japan.}
\email{hibi@math.sci.osaka-u.ac.jp}
\address{Pooja Singla, FB Mathematik, Universit\"at Duisburg-Essen, Campus Essen, 45117 Essen, Germany.}
\email{pooja.singla@uni-due.de}

\date{}
\maketitle
\begin{abstract}
A linear ball is a simplicial complex
whose geometric realization is homeomorphic to a ball
and whose Stanley--Reisner ring has a linear resolution.
It turns out that the Stanley--Reisner ring of the sphere
which is the boundary complex of a linear ball
satisfies the multiplicity conjecture.
A class of shellable spheres arising naturally from commutative algebra
whose Stanley--Reisner rings satisfy the multiplicity conjecture
will be presented.
\end{abstract}

\section*{Introduction}
The multiplicity conjecture due to Herzog, Huneke and Srinivasan is
one of the most attractive conjectures lying between combinatorics and commutative algebra.
First, we recall what the multiplicity conjecture says.

Let $R=\sum_{i = 0}^\infty R_i$ be a homogeneous
Cohen--Macaulay algebra over a field $R_0 = K$
of dimension $d$ with embedded dimension $n = \dim_K R_1$
and write $R = S / I$, where $S = K[x_1, \ldots, x_n]$ is the polynomial ring
in $n$ variables over $K$ and $I$ is a graded ideal of $S$.
Let
$H(R,i)=\dim_K R_i$, $i = 0, 1, 2, \ldots$, denote the Hilbert function of $R$
and $F(R,\lambda)=\sum_{i = 0}^\infty H(R,i)\lambda^i$ the
Hilbert series of $R$.
It is known that $F(R,\lambda)$ is a rational function of $\lambda$ of the form
\[
F(R,\lambda) =
\frac{h_0+h_1\lambda+\cdots+h_{\ell}\lambda^{\ell}}{(1-\lambda)^d},
\]
with each $h_i > 0$.
The multiplicity $e(R)$ of $R$ is
\[
e(R)= h_0 + h_1 + \cdots + h_\ell.
\]
Now, we consider the graded minimal free resolution
\[
0\longrightarrow F_p \longrightarrow \cdots \longrightarrow
F_1\longrightarrow S \longrightarrow R \longrightarrow 0
\]
of $R$ over $S$, where
$F_i=\bigoplus S(-j)^{\beta_{i,j}}$ with $\beta_{i,j}\geq 0$.
Let
\[
m_i=\min\{j: \beta_{i,j} \neq 0 \},
\, \, \, \, \, \, \, \, \, \,
M_i =\max\{j: \beta_{i,j} \neq 0 \}.
\]
The multiplicity conjecture due to Herzog, Huneke and Srinivasan
says that
\[
\frac{\prod_{i=1}^p m_i}{p!} \leq e(R) \leq  \frac{\prod_{i=1}^p M_i}{p !}.
\]

A nice survey of the multiplicity conjecture and the record of past results in 
different cases of the conjecture can be found in \cite{HZ}. For more recent results one may look into
\cite{welker}, \cite{tim},  \cite{novik}. 

In the present article we discuss the problem of finding
a natural class of spheres whose
Stanley--Reisner rings satisfy the multiplicity conjecture.

Let $\Delta$ be a simplicial complex on the vertex set
$[n] = \{ 1, \ldots, n \}$ of dimension $d - 1$ and
$K[\Delta] = S / I_\Delta$,
where $S = K[x_1, \ldots, x_n]$, its Stanley--Reisner ring.
Suppose that $\Delta$ is a ball, i.e., the geometric realization
$|\Delta|$ is a ball.
Let $\partial\Delta$ denote the boundary complex of $\Delta$
and suppose that each vertex of $\Delta$ belongs to $\partial\Delta$.
Thus $\partial\Delta$ is a sphere, i.e., the geometric realization
$|\partial\Delta|$ is a sphere, of dimension $d - 2$ on $[n]$.
Each face of $\partial\Delta$ is called a boundary face of $\Delta$
and each face of $\Delta\setminus \partial\Delta$ is
called an inside face of $\Delta$.
Let $m - 1$ denote the smallest dimension of a nonface of $\Delta$
and suppose that $2\leq  m \leq[(d + 1) / 2]$.
It turns out (Theorem \ref{mul}) that
the sphere $\partial\Delta$ satisfies the multiplicity conjecture
with assuming the hypothesis that
\begin{enumerate}
\item[(A1)] $\Delta$ has a minimal inside face of dimension $d - m$
and has no minimal inside face of dimension less than $m-1$;
\item[(A2)] the $h$-vector of $\partial\Delta$ is unimodal.
\end{enumerate}
A linear ball is a ball whose Stanley--Reisner
ring has a linear resolution.
It is shown that
the sphere which is the boundary complex of a linear ball
satisfies (A1) and (A2).
In particular the Stanley--Reisner ring of the sphere
which is the boundary complex of a linear ball
satisfies the multiplicity conjecture
(Corollary \ref{linmul}).

A class of shellable spheres satisfying (A1) and (A2) arises
from determinantal ideals.
Let $X=(X_{ij})_{1 \leq i \leq m \atop 1 \leq j \leq n}$
be an $m \times n$ matrix of indeterminates,
where $m \leq n$.
Write $\tau$ for the lexicographic order of the polynomial ring
$K[X]=K[\{ X_{ij} \}_{1 \leq i \leq m \atop 1 \leq j \leq n}]$
induced by the ordering of the variables
\[
X_{11}\geq X_{12}\geq \cdots \geq X_{1n} \geq X_{21} \geq \cdots \geq X_{2n}
\geq \cdots \geq
X_{m1} \geq  \cdots \geq X_{mn}.
\]
Let $I_r$ denote the ideal of $K[X]$ generated by
all $(r+1) \times (r+1)$ minors of $X$, where $1 \leq r \leq m-1$.
In particular $I_{m-1}$ is the ideal of $K[X]$
generated by all maximal minors of $X$.
It is known that the initial ideal $I_r^*$ of $I_r$ with respect to $\tau$
is generated by squarefree monomials.  Let $\Delta_r$ denote the simplicial
complex whose Stanley--Reisner ideal coincides with $I_r^*$.
Theorem \ref{minor} says that, for each $1 \leq r \leq m-1$,
the simplicial complex
$\Delta_r$ is a shellable ball satisfying (A1) and (A2).
Moreover $\Delta_r$ is a linear ball if and only if $r = m-1$ (Corollary \ref{linear}).

One of the natural classes of shellable linear balls arises from
the polarization of a power of the graded maximal ideal.
Let $\mm = (x_1, \ldots, x_n)$ be the graded maximal ideal
of $S = K[x_1, \ldots, x_n]$.
Each power $\mm^t$ of $\mm$ has a linear resolution.
Let $\Delta$ be the simplicial complex whose Stanley--Reisner ideal
coincides with the polarization of $\mm^t$.
It is shown (Theorem \ref{polar}) that $\Delta$ is a shellable linear ball
for $t \geq 0$ and hence it satisfies the multiplicity conjecture.

\section{The Multiplicity Conjecture}
First, we recall fundamental material
on Stanley--Reisner ideals and rings of simplicial complexes.
We refer the reader to \cite{BH}, \cite{Hibi}, \cite{ST} for  further information.
Let $[n] = \{ 1, \ldots, n \}$ be the vertex set and
$\Delta$ a simplicial complex on $[n]$.  Thus $\Delta$ is
a collection of subsets of $[n]$ such that
\begin{enumerate}
\item[(i)] $\{ i \} \in \Delta$ for all $i \in [n]$, and
\item[(ii)] if $F \in \Delta$ and $F' \subset F$, then $F' \in
\Delta$.
\end{enumerate}
Each element $F \in \Delta$ is called a {\em face} of $\Delta$.
The dimension of a face $F$ is $|F| - 1$. Let $d = \max\{ |F| : F
\in \Delta \}$ and define the dimension of $\Delta$ to be
$\dim \Delta =  d - 1$.
A {\em nonface} of $\Delta$ is a
subset $F$ of $[n]$ with $F \not\in \Delta$.

Let $f_i = f_i(\Delta)$ denote the number of faces of $\Delta$ of
dimension $i$.  Thus in particular $f_0 = n$. The sequence
$f(\Delta) = (f_0, f_1, \ldots, f_{d-1})$ is called the
{\em $f$-vector} of $\Delta$. Letting $f_{-1} = 1$, we define
the {\em $h$-vector} $h(\Delta) = (h_0, h_1, \ldots, h_d)$ of
$\Delta$ by the formula
\[
\sum_{i=0}^{d} f_{i-1} (t - 1)^{d-i} = \sum_{i=0}^{d} h_i t^{d-i}.
\]
Let $S = K[x_1, \ldots, x_n]$ denote the polynomial ring
in $n$ variables over a field $K$
with each $\deg x_i = 1$.
For each subset $F \subset [n]$, we set
\[
x_F = \prod_{i \in F} x_i.
\]
The {\em Stanley--Reisner ideal} of $\Delta$ is the ideal
$I_\Delta$ of $S$ which is generated by those squarefree monomials
$x_F$ with $F \not\in \Delta$. In other words,
\[
I_\Delta = ( x_F : F \not\in \Delta).
\]
The quotient ring $K[\Delta] = S / I_\Delta$ is called
the {\em Stanley--Reisner ring} of $\Delta$.
It follows that the Hilbert series of $K[\Delta]$ is 
\[
F(K[\Delta],\lambda) =
(h_0 + h_1 \lambda + \cdots + h_d \lambda^d)/(1 - \lambda)^d,
\]
where $(h_0, h_1, \ldots, h_d)$ is the $h$-vector of $\Delta$.
Thus in particular the multiplicity of $K[\Delta]$ is
$\sum_{i=0}^{d} h_i$ $( = f_{d-1} )$.

We say that $\Delta$ is Cohen--Macaulay (resp. Gorenstein)
over $K$ if $K[\Delta]$ is Cohen--Macaulay (resp. Gorenstein).
If the geometric realization $|\Delta|$
of $\Delta$ is homeomorphic to a ball, then $\Delta$ is Cohen--Macaulay
over an arbitrary field.  
If the geometric realization $|\Delta|$
of $\Delta$ is homeomorphic to a sphere, then $\Delta$ is Gorenstein
over an arbitrary field.  

Now, let $\Delta$ be a  simplicial complex on $[n]$ of dimension
$d - 1$ whose geometric realization $|\Delta|$ is homeomorphic to a manifold.
The {\it boundary complex } $\partial\Delta$
of $\Delta$ consists of those faces $F$ of $\Delta$
with the property that
there is a $(d-2)$-dimensional face $F'$ of $\Delta$ with $F \subset F'$
such that $F'$
is contained in exactly one $(d-1)$-dimensional face of $\Delta$.
Each face of $\partial\Delta$ is called a {\it boundary face}
and each face of
$\Delta\setminus \partial\Delta$ is called an {\it inside face} of $\Delta$.
In particular if $\Delta$ is a ball, i.e.,
$|\Delta|$
is homeomorphic to a ball, of dimension $d - 1$, then
$\partial\Delta$ is a sphere, i.e.,
$|\partial\Delta|$
is homeomorphic to a sphere, of dimension $d - 2$.
\begin{Theorem}[Hochster]\label{hoch} 
Let $\Delta$ be a Cohen--Macaulay complex 
over a field $K$  of dimension $d - 1$
whose geometric realization $|\Delta|$ is a manifold 
with a nonempty boundary complex $\partial\Delta$,
and let $\omega_{\Delta}$ be the canonical ideal of  
$K[\Delta]$.
Write $J$ for the ideal of $K[\Delta]$ generated
by those monomials $\overline{x_F}$ with $F \in \Delta \setminus \partial\Delta$.  
Then the following conditions are equivalent:
\begin{enumerate}
\item[(a)] $\omega_{\Delta} \iso J$ as a $\ZZ^n$-graded $K[\Delta]$-module;
\item[(b)] $\partial\Delta$ is a Gorenstein complex over $K$.
\end{enumerate}
If the equivalent conditions hold, then $K[\partial\Delta]\iso K[\Delta]/\omega_{\Delta}.$

\end{Theorem}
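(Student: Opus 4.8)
The plan is to describe the $\ZZ^n$-graded canonical module $\omega_{\Delta}$ of $K[\Delta]$ explicitly and to match it with $J$. Two combinatorial remarks come first. Since $\partial\Delta$ is a subcomplex of $\Delta$, any face of $\Delta$ containing an inside face is again an inside face; hence for $a\in\NN^n$ with $\supp(a)\in\Delta$ one has $\overline{x^a}\in J$ if and only if $\supp(a)\in\Delta\setminus\partial\Delta$. Consequently the residue classes of the monomials $x^a$ with $\supp(a)\in\partial\Delta$ form a $K$-basis of $K[\Delta]/J$, the Stanley--Reisner relations match, and $K[\Delta]/J\iso K[\partial\Delta]$ as $\ZZ^n$-graded $K$-algebras. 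Moreover $\grade J=1$: indeed $J\neq K[\Delta]$ because $\partial\Delta\neq\emptyset$, while $\dim K[\partial\Delta]=d-1$ since $\partial\Delta$ is pure of dimension $d-2$. Granting these, (a)$\Rightarrow$(b) is immediate: if $J\iso\omega_{\Delta}$, then $J$ is a canonical ideal of grade $1$, so $K[\Delta]/J\iso K[\partial\Delta]$ is Gorenstein by the standard property of canonical ideals \cite{BH}; and once (a) holds, $J$ is itself a canonical ideal of $K[\Delta]$, so, taking it as $\omega_{\Delta}$, $K[\Delta]/\omega_{\Delta}=K[\Delta]/J\iso K[\partial\Delta]$, which is the last assertion. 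Thus everything reduces to proving (b)$\Rightarrow$(a).

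For the structure of $\omega_{\Delta}$ I would use graded local duality together with Hochster's formula for the local cohomology of a Stanley--Reisner ring \cite{BH,ST}: for $a\in\ZZ^n$ one has $(\omega_{\Delta})_a\iso\bigl(H^d_{\mm}(K[\Delta])_{-a}\bigr)^{\vee}$, which equals $\widetilde H^{\,d-1-|\supp(a)|}(\lk_\Delta\supp(a);K)^{\vee}$ when $a\in\NN^n$ and is zero otherwise, where $(-)^{\vee}=\Hom_K(-,K)$. Since $\Delta$ is Cohen--Macaulay, $\lk_\Delta G$ is Cohen--Macaulay of dimension $d-1-|G|$, so the cohomology appearing here is the top reduced cohomology of the link; and the $K[\Delta]$-module structure on $\omega_{\Delta}$ is induced by the natural transition maps between these top cohomologies: writing $G=\supp(a)$, multiplication by $x_i$ is the identity of $\widetilde H^{\,d-1-|G|}(\lk_\Delta G;K)^{\vee}$ when $i\in G$, and otherwise the dual of the Mayer--Vietoris connecting map $\widetilde H^{\,d-2-|G|}(\lk_\Delta(G\cup\{i\});K)\to\widetilde H^{\,d-1-|G|}(\lk_\Delta G;K)$. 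This is Gr\"abe's description of the canonical module; see \cite{ST}.

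The heart of the proof is the bridge to hypothesis (b). By Reisner's criterion and Stanley's characterization, $\partial\Delta$ (which has no cone points) is Gorenstein over $K$ precisely when it is a $K$-homology $(d-2)$-sphere, i.e.\ every link $\lk_{\partial\Delta}(F)$ has the $K$-homology of a sphere of dimension $d-2-|F|$. On the other hand $|\Delta|$ is a manifold with boundary, so the link in $\Delta$ of an inside face $G$ is a $K$-homology sphere of dimension $d-1-|G|$, while the link of a boundary face $F$ is a $K$-homology manifold with nonempty boundary $\lk_{\partial\Delta}(F)$, whence its top reduced cohomology vanishes; combining this with Lefschetz duality (an acyclic $K$-homology manifold with boundary has a $K$-homology sphere as boundary), one sees that (b) is equivalent to
\[
\dim_K\widetilde H^{\,d-1-|G|}(\lk_\Delta G;K)=\begin{cases}1,&G\in\Delta\setminus\partial\Delta,\\ 0,&G\in\partial\Delta,\end{cases}
\]
for every face $G$ of $\Delta$ (including $G=\emptyset$). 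By the previous paragraph this is exactly the statement that $\dim_K(\omega_{\Delta})_a=\dim_K J_a$ for all $a\in\ZZ^n$. Assuming (b), then, $\omega_{\Delta}$ and $J$ have the same $\ZZ^n$-graded Hilbert function; and in fact $\omega_{\Delta}\iso J$ as $\ZZ^n$-graded modules, because on each side the transition maps between consecutive nonzero (one-dimensional) graded components are isomorphisms: on the $J$-side they are multiplications of monomials, and on the $\omega_{\Delta}$-side the connecting maps above, which carry the fundamental class of a homology-sphere link onto that of its codimension-one sub-link when the relevant union of faces is still a face of $\Delta$, and vanish otherwise. A compatible choice of generators of the one-dimensional components then yields the required $\ZZ^n$-graded isomorphism $\omega_{\Delta}\iso J$, proving (b)$\Rightarrow$(a).

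The main obstacle is precisely this bridge: translating the Gorenstein property of $\partial\Delta$ into the cohomological behaviour of all links of $\Delta$ rests on the topology of the manifold with boundary $|\Delta|$---that interior-face links are homology spheres, that boundary-face links are homology manifolds whose boundary is the corresponding link of $\partial\Delta$, and on Alexander/Lefschetz duality---and it is here, rather than in any purely ring-theoretic step, that the hypotheses do their work. (One may note that these same facts show a Cohen--Macaulay manifold with nonempty boundary to be $K$-acyclic, so that under the stated hypotheses (b) in fact always holds; the equivalence is recorded in this form because it persists for more general Cohen--Macaulay complexes with $\partial\Delta$ defined combinatorially.) A secondary, more bookkeeping-type point is to check that the connecting maps in Gr\"abe's description are, for a suitable choice of bases of the one-dimensional components, compatible with monomial multiplication in $J$, so that the coincidence of Hilbert functions is genuinely promoted to a graded module isomorphism.
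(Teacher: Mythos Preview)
The paper does not prove this theorem; it is stated without proof, attributed to Hochster, and used as a black box throughout. There is therefore no paper argument to compare against, and your write-up must stand on its own.

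Your approach---computing $\omega_\Delta$ degree by degree via Hochster's formula, invoking Gr\"abe's description of the module structure, and matching against $J$---is the standard one and is correct in outline. The identification $K[\Delta]/J\iso K[\partial\Delta]$, the grade computation, and the direction (a)$\Rightarrow$(b) are clean. You also correctly observe that when $|\Delta|$ is an honest manifold with boundary the links already have the right homology, so (b) is automatic; the equivalence only has content in the broader combinatorial setting you allude to.

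The one point that is not merely ``bookkeeping'' is the claim that the Gr\"abe transition maps between adjacent one-dimensional graded pieces of $\omega_\Delta$ are isomorphisms. You assert this (``carry the fundamental class \ldots onto that of its codimension-one sub-link'') but do not argue it. Matching $\ZZ^n$-graded Hilbert functions alone does not yield a module isomorphism; you must show that whenever $G$ and $G\cup\{i\}$ are both interior faces the connecting map $\widetilde H^{\,d-2-|G|}(\lk_\Delta(G\cup\{i\});K)\to\widetilde H^{\,d-1-|G|}(\lk_\Delta G;K)$ is nonzero. In the manifold case this follows from an orientation/degree argument; in the general Cohen--Macaulay case one needs the Gorenstein hypothesis on $\partial\Delta$ and a Lefschetz-type duality for the links. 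This step deserves an actual proof rather than a parenthetical, since it is precisely where condition (b) does its work in the implication (b)$\Rightarrow$(a).
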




Let $\Delta$ be a simplicial complex on $[n]$ of dimension
$d - 1$ whose geometric realization $|\Delta|$ is a ball
and $\partial\Delta$ its boundary complex.  
Assume that every vertex of $\Delta$ belongs to $\partial\Delta$.
Thus
$\partial\Delta$ is a simplicial complex on $[n]$ of dimension
$d - 2$ whose geometric realization $|\partial\Delta|$ is a sphere. 
Since $\partial\Delta$ is Gorenstein, it follows that  
\begin{enumerate}
\item[(P1)] The $h$-vector
$h(\partial\Delta) = (h'_0, h'_1, \ldots, h'_{d-1})$
of $\partial\Delta$ is symmetric i.e. $h'_i=h'_{d-1-i}$ for all $i=0,\ldots,d-1$;
see \cite[Theorem 5.4.2, Theorem 5.6.2]{BH}.
\item[(P2)] The minimal free resolution of the Stanley--Reisner ring of
$\partial\Delta$ is symmetric (\cite[Corollary 21.16]{E}), i.e.
if
\[
0\longrightarrow F_{p} \longrightarrow \cdots \longrightarrow
F_1\longrightarrow F_0 \longrightarrow S/I_{\partial\Delta} \longrightarrow 0
\]
is the minimal free resolution of the ring $S/I_{\partial\Delta}$, where
$F_i=\Dirsum_{j}S(-j)^{\beta_{i,j}}$, $i=0,\ldots,p$, $p=n-(d-1)$ and $F_0=S$, then
we have 
$\beta_{i,j}=\beta_{p-i,n-j}$ for all $i=0,\ldots,p$.
In particular,
$M_i=n-m_{p-i}$ where $M_i=\max\{j \:\ \beta_{i,j} \neq 0\}$ and 
$m_i=\min\{j \:\ \beta_{i,j}\neq 0\}$.
\item[(P3)] The canonical ideal $\omega_{\Delta}$ of the Stanley--Reisner ring $K[\Delta]
= S / I_{\Delta}$ is generated
by the monomials $\overline{x_F},\ F \in \Delta \setminus \partial\Delta$ (see Theorem \ref{hoch}).
\end{enumerate}
In addition,  
\begin{enumerate}
\item [(F1)] 
Let 
\[
0\longrightarrow F'_{n-d} \longrightarrow \cdots \longrightarrow
F'_1\longrightarrow F'_0 \longrightarrow S/I_{\Delta} \longrightarrow 0
\]
be the minimal free resolution of  $S/I_{\Delta}$ 
with $F'_i=\Dirsum_{j}S(-j)^{\beta'_{i,j}}$. Then
the generators of the canonical module $\omega_{\Delta}$ of $K[\Delta]$
are of degrees $n-j$ with $\beta'_{n-d,j}\neq 0$ (see \cite[Corollary 3.3.9]{BH}).
\item [(F2)] One has $m_1 < m_2 < \cdots < m_{n-d+1}$.  
\end{enumerate}
Now, let $m - 1$ denote the smallest dimension of the nonfaces
of $\Delta$.  In other words, $m$ is the smallest degree of
monomials belonging to $G(I_{\Delta})$,
the minimal system of monomial generators of $I_{\Delta}$.  
We will assume that $2\leq m \leq [(d + 1) / 2]$.
Our goal is to show that
the Stanley--Reisner ring $K[\partial\Delta] = S/I_{\partial\Delta}$
satisfies the multiplicity conjecture under the following hypothesis (Theorem \ref{mul}):
\begin{enumerate}
\item[(A1)] $\Delta$ has a minimal (under inclusion) inside face of dimension $d-m$ and
has no minimal inside face of dimension less than $m-1$;
\item[(A2)] The $h$-vector of the
boundary complex $\partial\Delta$ is unimodal.
\end{enumerate}

(In general, we say that a finite sequence of real numbers
$a_0,\ldots,a_t$ is {\it unimodal}
if
\[
a_0\leq a_1\leq \cdots\leq a_j\geq a_{j+1}\geq \cdots \geq a_t
\]
for some
$0\leq j\leq t$.)

Now, we wish to understand the minimal and maximal shifts given by $m_i$ and $M_i$  respectively
 of the minimal free resolution
$$ \mathcal{F}_{\partial\Delta}: 0\longrightarrow F_{n-d+1}\longrightarrow\cdots\longrightarrow F_1
\longrightarrow S \longrightarrow S/I_{\partial\Delta} \longrightarrow 0 $$
of $S/I_{\partial\Delta}$ where $F_i=\bigoplus_{j}S(-j)^{\beta_{i,j}}$, to calculate the lower and upper bounds of the
multiplicity of $S/I_{\partial\Delta}$. First, we consider the minimal free resolution
$$\mathcal{F}_{\Delta}: 0\longrightarrow F'_{n-d}\longrightarrow\cdots\longrightarrow F'_1
\longrightarrow S \longrightarrow S/I_\Delta \longrightarrow 0$$
of
$S/I_{\Delta}$ where $F'_i=\bigoplus_{j}S(-j)^{\beta'_{i,j}}$. Let $m'_i$ and $M'_i$ denote the minimal and maximal
shifts of the minimal free resolution $\mathcal{F}_{\Delta}$.
Since $m$
is the minimum of the degree of generators of $I_{\Delta}$, one has $m'_1=m$.
By the assumption (A1) on $\Delta$,
there exists a minimal inside face of $\Delta$ of dimension $d-m$, hence by Theorem \ref{hoch}, it follows
 that
 the canonical ideal $\omega_{\Delta}$ of $\Delta$ has a generator of degree $d-m+1$. Therefore
 $\beta'_{n-d,n-(d-m+1)}\neq 0$, by (F1). As we have $m'_1=m$ and $m'_{n-d} \leq m+n-d-1$,
 we get $m'_i= m+i-1$ for $i=1,\ldots,n-d$, by (F2).

 We claim that the minimal shifts  in the minimal free resolution $\mathcal{F}_{\partial\Delta}$
 of $S/(I_{\partial{\Delta}})$  are given by $m_i=m+i-1$ for $i=1,\ldots, n-d$  and $m_{n-d+1}=n$.
 Indeed, by assumption
 (A1), we have that the canonical ideal ${\omega_\Delta}$ has no generator of degree less than $m$. Hence
the $S$-module
 $I_{\partial\Delta}/I_{\Delta}$  has no generator of degree less than $m$ (Theorem \ref{hoch}).
 From the following short
 exact sequence
 $$0\longrightarrow I_{\Delta} \longrightarrow I_{\partial\Delta}\longrightarrow
 I_{\partial\Delta}/I_{\Delta}\longrightarrow 0,$$
 we get the following long exact sequence
 \begin{eqnarray*}
 \cdots\longrightarrow\Tor_{i+1}(I_{\partial\Delta}/I_{\Delta},K)\\
 \longrightarrow\Tor_{i}(I_{\Delta},K)\longrightarrow \Tor_{i}(I_{\partial\Delta},K)
 \longrightarrow \Tor_{i}(I_{\partial\Delta}/I_{\Delta},K)\longrightarrow\cdots
 \end{eqnarray*}
 
Now, as $\Tor_i(I_{\Delta},K)_{i+t}=0$  and
 $\Tor_{i}(I_{\partial\Delta}/I_{\Delta},K)_{i+t}= 0$ for $t\leq m-1$ and $i=1,\ldots,n-d$,
 from the above long exact sequence 
 we get $\Tor_i(I_{\partial\Delta},K)_{i+t}= 0$ for $t\leq m-1$ and $i=1,\ldots,n-d$. Also as
 $\Tor_{i+1}(I_{\partial\Delta}/I_{\Delta},K)_{i+1+m-1}=0$ and $\Tor_{i}(I_{\Delta},K)_{i+m}\neq 0$,
 we get $\Tor_i(I_{\partial\Delta},K)_{i+m}\neq 0$, $i=1,\ldots n-d$. From here it follows that $m_i=m+i-1$ for
 $i=1,\ldots,n-d$. Since $S/I_{\partial\Delta}$
 is Gorenstein and $m_0=M_0=0$, we have $m_{n-d+1}=M_{n-d+1}=n$ by Property (P2).\pagebreak[3]

 Now, we need to determine the maximal shifts $M_i$ for $i=1,\ldots,n-d$
 in the minimal free resolution $\mathcal{F}_{\partial\Delta}$ of $S/I_{\partial\Delta}$.
 Again, as $S/I_{\partial\Delta}$ is Gorenstein, by Property (P2) we have
 $M_i=n-m_{n-d+1-i}=n-(m+n-d+1-i-1)=d-m+i$ for $i=1,\ldots,n-d$.

 Hence, we have now
 \begin{eqnarray*}
L&=&\prod_{i=1}^{n-d+1}\frac{m_i}{(n-d+1)!}\;=\;\frac{n\prod_{i=1}^{n-d} (m+i-1)}{(n-d+1)!}\;\; \and\\
U&=&\prod_{i=1}^{n-d+1}\frac{M_i}{(n-d+1)!}\;=\;\frac{n\prod_{i=1}^{n-d}(d-m+i)}{(n-d+1)!}.
\end{eqnarray*}
Next, our goal is to estimate the multiplicity $e(S/I_{\partial\Delta})$ of the ring $S/I_{\partial\Delta}$.
Let $h'_0,\ldots,h'_{d-1}$ denotes the $h$-vector of the ring $S/I_{\partial\Delta}$. As the ring
$S/I_{\partial\Delta}$ is Cohen-Macaulay, and $m$ is the minimum of the degree of the generators of
$I_{\partial\Delta}$, we have $h'_i=h'_{d-1-i}=\binom{n-d+1+i-1}{i}=\binom{n-d+i}{i}$ for $i=0,\ldots,m-1$.
From assumption (A2) and property (P1) we have that the $h$-vector is symmetric and unimodal, therefore
we conclude that $h'_i\geq \binom{n-d+m-1}{m-1}$ for $i=m,\ldots,d-(m+1)$.\pagebreak[3]

Hence $$e(S/I_{\partial\Delta})=\sum_{i=1}^{d-1}{h_i}\geq 2\sum_{i=0}^{m-1}\binom{n-d+i}{i}+(d-2m)
\binom{n-d+m-1}{m-1}.$$\pagebreak[3]

\begin{Theorem}\label{mul} Let $\Delta$ be a ball and 
$\partial\Delta$ be its  boundary complex. Suppose that the sphere
$\partial\Delta$ satisfies the assumptions $(A1)$ and $(A2)$. Then the Stanley-Reisner ring
$S/\partial\Delta$ satisfies the multiplicity conjecture i.e.
 $$L \leq e(S/I_{\partial\Delta}) \leq U.$$
 \end{Theorem}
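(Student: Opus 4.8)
The bulk of the computation has in fact been assembled in the discussion preceding the statement. We already have the explicit minimal and maximal shifts of the resolution $\mathcal{F}_{\partial\Delta}$, namely
\[
m_i=m+i-1,\qquad M_i=d-m+i\quad(1\le i\le n-d),\qquad m_{n-d+1}=M_{n-d+1}=n,
\]
hence the closed forms for $L$ and $U$ recorded above, and we have the estimate
\[
e(S/I_{\partial\Delta})\ \ge\ 2\sum_{i=0}^{m-1}\binom{n-d+i}{i}+(d-2m)\binom{n-d+m-1}{m-1}
\]
coming from Cohen--Macaulayness, Gorenstein symmetry (P1) and the unimodality (A2). So the plan is to reduce the assertion $L\le e(S/I_{\partial\Delta})\le U$ to two elementary binomial inequalities. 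Throughout I write $N=n-d$; note $N\ge 1$, since $\Delta$ has a nonface (of dimension $m-1\ge1$), so $n>d$.

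For the lower bound I would first rewrite $L=\frac{n\prod_{i=1}^{N}(m+i-1)}{(N+1)!}=\frac{n}{N+1}\binom{N+m-1}{m-1}$ and invoke the hockey--stick identity $\sum_{i=0}^{m-1}\binom{N+i}{i}=\binom{N+m}{m-1}$ to put the lower estimate for $e$ in the form $2\binom{N+m}{m-1}+(d-2m)\binom{N+m-1}{m-1}$. Factoring out $\binom{N+m-1}{m-1}$ and using $\binom{N+m}{m-1}=\frac{N+m}{N+1}\binom{N+m-1}{m-1}$, the inequality $L\le e(S/I_{\partial\Delta})$ becomes, after clearing the common positive factor $\binom{N+m-1}{m-1}/(N+1)$, simply
\[
n\ \le\ 2(N+m)+(d-2m)(N+1);
\]
and the difference of the two sides equals $N\,(d-2m+1)$, which is $\ge 0$ because $N\ge 0$ and $m\le[(d+1)/2]$. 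This is the one place where the hypothesis $m\le[(d+1)/2]$ enters, and it enters essentially: for larger $m$ the inequality breaks.

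For the upper bound I would use only that $K[\partial\Delta]$ is Cohen--Macaulay and Gorenstein. Gorenstein symmetry (P1) gives $h'_i=h'_{d-1-i}$, and Cohen--Macaulayness (Krull dimension $d-1$, so an Artinian reduction is a standard graded $K$-algebra with $\dim_K A_1=N+1$, hence a quotient of a polynomial ring in $N+1$ variables) gives $h'_i\le\binom{N+i}{i}$ for all $i$. Writing $e(S/I_{\partial\Delta})=\sum_{i=0}^{d-1}h'_i$, splitting the sum at its middle index and re-indexing the upper half by $j=d-1-i$, two applications of the hockey--stick identity yield
\[
e(S/I_{\partial\Delta})\ \le\ \binom{N+\lceil d/2\rceil}{\lceil d/2\rceil-1}+\binom{N+\lfloor d/2\rfloor}{\lfloor d/2\rfloor-1},
\]
a bound independent of $m$. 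On the other hand $U=\frac{n}{N+1}\binom{n-m}{N}$ is non-increasing in $m$ on the admissible range $2\le m\le[(d+1)/2]$ (as $m$ grows, the top argument of the binomial shrinks while $N$ is fixed), so it suffices to compare the displayed bound with $U$ at $m=[(d+1)/2]$. Separating the two parities of $d$ and repeatedly applying $\binom{a}{b-1}=\frac{b}{a-b+1}\binom{a}{b}$, the remaining inequality collapses to $n\ge d$ when $d$ is even and to the identity $e(S/I_{\partial\Delta})=U$ when $d$ is odd. Combining the two halves gives $L\le e(S/I_{\partial\Delta})\le U$.

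The argument is essentially bookkeeping. The one load-bearing observation is that the crude upper estimate for $e(S/I_{\partial\Delta})$ obtained from (P1) and Cohen--Macaulayness does not depend on $m$, whereas $U$ attains its minimum over the admissible $m$ exactly at $m=[(d+1)/2]$, so a single extremal comparison disposes of the whole upper bound; for $d$ odd that bound is sharp, consistent with the linear--ball case treated in Corollary \ref{linmul}. The only nuisances I anticipate are the hockey--stick manipulations and the parity split for $d$; I do not expect a genuine obstacle beyond these.
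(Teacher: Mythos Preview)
Your argument is correct and, for the lower bound, literally identical to the paper's: both reduce to $2(N+m)+(d-2m)(N+1)\ge n$, and your observation that the difference is $N(d-2m+1)\ge 0$ is in fact a cleaner finish than the paper's final line.

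For the upper bound the two arguments are equivalent but packaged differently. The paper introduces the boundary of the cyclic polytope $\partial C(n,d-1)$, whose $h$-vector is exactly $h^*_i=\binom{N+i}{i}$ on the lower half, observes $h'_i\le h^*_i$ to get $e(S/I_{\partial\Delta})\le e(\partial C(n,d-1))$, proves in a separate Lemma that $e(\partial C(n,d-1))\le \prod M_i^*/(N+1)!$, and then uses $M_i^*\le M_i$ (which amounts to $m\le[(d+1)/2]$). You instead obtain the same numerical bound $\binom{N+\lceil d/2\rceil}{\lceil d/2\rceil-1}+\binom{N+\lfloor d/2\rfloor}{\lfloor d/2\rfloor-1}$ directly from the Macaulay inequality $h'_i\le\binom{N+i}{i}$ together with Gorenstein symmetry, and replace the comparison $M_i^*\le M_i$ by the equivalent monotonicity of $U$ in $m$. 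The extremal case $m=[(d+1)/2]$ then forces exactly the paper's Lemma computation, with the same parity split and the same outcomes (equality for $d$ odd, the inequality $d\le n$ for $d$ even). Your route is more self-contained in that it avoids naming cyclic polytopes; the paper's route has the advantage of identifying the extremal object geometrically. One small slip: in your last display you write ``the identity $e(S/I_{\partial\Delta})=U$'' where you mean that your \emph{upper estimate} for $e$ equals $U$; the multiplicity itself need not attain $U$.
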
\pagebreak[3]
 For the proof of the theorem, we need to first define cyclic polytopes. Let $C(n,d-~1)$ denote the
 convex hull of any $n$ distinct points in $\RR^{d-1}$ on the curve 
 $\{(t,t^2,\ldots,t^{d-1})\in \RR^{d-1}, t\in \RR\}$.
 The polytope $C(n,d-1)$ is called the cyclic polytope of dimension $d-1$. It is known that $C(n,d-1)$
 is simplicial (i.e., every proper face is a simplex),
 and so the boundary  of $C(n,d-1)$ defines a simplicial complex which we denote by
 $\partial C(n,d-1)$ such that $|\partial C(n,d-1)|$ is a sphere of dimension $d-2$.
 Let $(h^*_0,h^*_1,\ldots,h^*_{d-1})$ denote the $h$-vector of $\partial C(n,d-1)$. Then
 $$h^*_i=h^*_{d-1-i}=\binom{n-d+i}{i} \;\;\mbox{for}\;\; i=1,\ldots,\lfloor \frac{d-1}{2} \rfloor,$$
 (see \cite[Section 3]{ST}). Let $e(\partial C(n,d-1))=\sum h^*_i$ denotes the multiplicity of the 
 Stanley-Reisner ring of the boundary complex
 $\partial C(n,d-1)$. Notice that we have $h'_i \leq h^*_i$, hence
 \begin{equation} \label{com}
 e(S/I_{\partial\Delta}) \leq e\big(\partial C(n,d-1)\big).
 \end{equation}
 In \cite{TH}, the  minimal free resolution of the $\partial C(n,d-1)$ is computed. We have the following
 \cite[Theorem 3.2]{TH}:
If $d-1 \geq 2$ is even, then the maximal shifts $M_i^*$ in the minimal free resolution of
 $\partial C(n,d-1)$ are given by
 \begin{equation}
 M_i^*=\frac{d-1}{2}+i \;\;\mbox{for}\;\;i=1,\ldots, n-d \;\; \and \;\;M_{n-d+1}^*=n
 \end{equation}
 and if $d-1 \geq 3$ is odd, then the maximal shifts $M_i^*$  are as follows:
 \begin{equation}
 M_i^*=\lfloor\frac{d-1}{2}\rfloor+i+1 \;\;\mbox{for}\;\;i=1,\ldots,n-d \;\; \and\;\; M_{n-d+1}^*=n.
 \end{equation}
Even though the following Lemma \ref{cyclic} follows from \cite[Theorem 1.2]{HS}, we want to give a direct 
computational proof. 

\begin{Lemma}  
\label{cyclic} We have
\begin{equation}\label{up}
e\big(\partial C(n,d-1)\big) \leq \frac{\prod_{i=1}^{n-d+1} M_i^*}{(n-d+1)!}.
\end{equation}
\end{Lemma}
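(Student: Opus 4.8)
The plan is to establish \eqref{up} by a direct computation of both sides. The multiplicity $e(\partial C(n,d-1))$ is the sum $\sum_{i=0}^{d-1} h_i^*$ of the $h$-vector, and since $h_i^* = \binom{n-d+i}{i}$ for $i \le \lfloor (d-1)/2 \rfloor$ with symmetry $h_i^* = h_{d-1-i}^*$, this sum can be evaluated in closed form using the hockey-stick identity $\sum_{i=0}^{k}\binom{n-d+i}{i} = \binom{n-d+k+1}{k}$. One must treat separately the case $d-1$ even (where the middle term $h_{(d-1)/2}^*$ is counted once) and $d-1$ odd (where the two middle terms coincide and the symmetric sum is $2\sum_{i=0}^{(d-2)/2}\binom{n-d+i}{i}$), mirroring the case split already present for the maximal shifts $M_i^*$. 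So the left-hand side of \eqref{up} becomes an explicit expression in $n$ and $d$.

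For the right-hand side, I would substitute the formulas for $M_i^*$ from the cited \cite[Theorem 3.2]{TH}. In the even case $M_i^* = \frac{d-1}{2}+i$ for $i=1,\dots,n-d$ and $M_{n-d+1}^* = n$, so the numerator is $n \cdot \prod_{i=1}^{n-d}\left(\frac{d-1}{2}+i\right) = n \cdot \frac{\left(\frac{d-1}{2}+n-d\right)!}{\left(\frac{d-1}{2}\right)!}$, which one can rewrite in terms of binomial coefficients; dividing by $(n-d+1)!$ gives a clean ratio. The odd case is analogous with $M_i^* = \lfloor\frac{d-1}{2}\rfloor + i + 1$. The inequality \eqref{up} then reduces to a comparison between a sum of binomial coefficients and a product of consecutive integers divided by a factorial — that is, between the ``additive'' multiplicity bound and a quantity that is essentially itself a single binomial coefficient times $n$.

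The main obstacle, and the heart of the argument, will be verifying this resulting inequality between the explicit closed forms. Concretely, after simplification one expects to be comparing something like $2\binom{n-d+\lfloor (d-1)/2\rfloor}{\lfloor(d-1)/2\rfloor}$ (plus lower-order correction terms from the hockey-stick collapse) against $\frac{n}{n-d+1}\binom{n-\frac{d+1}{2}}{n-d}$ or its even-case analogue. I would prove this by induction on $n$ (with $d$ fixed), checking the base case $n = d$ directly — where both sides collapse since $\partial C(d,d-1)$ is the boundary of a simplex — and then showing the inductive step amounts to an elementary inequality between ratios of consecutive binomial coefficients, using that $\binom{n-d+k+1}{k}/\binom{n-d+k}{k} = \frac{n-d+k+1}{n-d+1}$. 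Alternatively, one can avoid induction entirely by bounding each $h_i^*$ in the sum by the largest middle term $\binom{n-d+\lfloor(d-1)/2\rfloor}{\lfloor(d-1)/2\rfloor}$ and comparing $d$ times this quantity with the product side; the product $\prod_{i=1}^{n-d}(\frac{d-1}{2}+i)$ grows fast enough relative to $(n-d)!$ that this crude bound suffices. I would pursue whichever of these two routes yields the shorter case analysis, and I expect the crude-bound approach to work since \eqref{com} has already absorbed the sharpness we need.
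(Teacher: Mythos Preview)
Your setup matches the paper's proof exactly: split by the parity of $d-1$, evaluate $e(\partial C(n,d-1))$ via the hockey-stick identity, and substitute the known shifts $M_i^*$ into the right-hand side. Where you diverge is at the very end, and you are overestimating the difficulty there. Once both sides are written out, the paper simply simplifies: in the case $d-1$ even one finds
\[
e\big(\partial C(n,d-1)\big)=\frac{n\,\prod_{i=1}^{n-d}\bigl(\tfrac{d-1}{2}+i\bigr)}{(n-d+1)!}=U,
\]
an \emph{equality}, while in the case $d-1$ odd the comparison reduces after cancellation to the single inequality $d\leq n$. No induction and no auxiliary bounding is needed.

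This matters for your second proposed route: the crude bound $e\leq d\cdot h^*_{\lfloor (d-1)/2\rfloor}$ followed by showing $d\cdot h^*_{\lfloor (d-1)/2\rfloor}\leq U$ would \emph{fail} in the even-parity case, precisely because $e=U$ there and the middle term strictly dominates the others whenever $n>d$ and $d\geq 3$. Your induction alternative would go through, but it is more work than the one-line algebraic simplification the paper carries out.
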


\begin{proof} Let $U=\frac{\prod_{i=1}^{n-d+1} M_i^*}{(n-d+1)!}$. 
Let first $d-1\geq 2$ is even. Then
$$U=\frac{n(\frac{d}{2}+\frac{1}{2})(\frac{d}{2}+\frac{3}{2})\cdots(n-\frac{d}{2}-\frac{1}{2})}
{(n-d+1)!}.$$
We have the multiplicity
\begin{eqnarray*} 
&&e\big(\partial C(n,d-1)\big)=\sum_{i=0}^{d-1}h^*\\
&=&2\big[\binom{n-d+0}{0}+\cdots+\binom{(n-d)+d/2-3/2}{d/2-3/2}\big]+\binom{(n-d)+d/2-1/2}{d/2-1/2}\\
&=&2\binom{n-d/2-1/2}{d/2-3/2}+\binom{n-d/2-1/2}{d/2-3/2} \\
&=&\frac{2(n-d/2-1/2)\cdots(d/2-1/2)}{(n-d+1)!}+\frac{(n-d/2-1/2)\cdots(d/2+1/2)}{(n-d)!}\\
&=&\frac{(n-d/2-1/2)\cdots(d/2+1/2)}{(n-d+1)!}(d-1+n-d+1)\\
&=&U.
\end{eqnarray*}
Now let $d-1\geq 3$ be odd. Then $$U=\frac{n\big(\frac{d}{2}+1\big)\cdots\big(\frac{d}{2}+(n-d)\big)}{(n-d+1)!}.$$
And the multiplicity is given by
\begin{eqnarray*} 
e\big(\partial C(n,d-1)\big)&=&\sum_{i=0}^{d-1}h^*\\
&=&2\big[\binom{n-d+0}{0}+\binom{n-d+1}{1}+\cdots+\binom{n-d+d/2-1}{d/2-1}\big]\\
&=&2\binom{n-d/2}{d/2-1}\\
&=&2\frac{(n-d/2)\cdots(d/2+1)(d/2)}{(n-d+1)!}.\\
\end{eqnarray*}
We see that $e\big(\partial C(n,d-1)\big)\leq U$ if and only if $d \leq n$ which is true.
\end{proof}
\begin{proof} [Proof of Theorem \ref {mul}] 
Since $m \leq [(d+1)/2]$, we have $M_i^* \leq  M_i$ both when $d$ is odd and even.
Hence, by Equation (\ref{com}) and
Equation (\ref{up}), we get
\begin{equation}\label{fo}
e(S/I_{\partial\Delta}) \leq \frac{\prod_{i=1}^{n-d+1} M_i}{(n-d+1)!}.
\end{equation}
It remains to show that $ e(S/I_{\partial\Delta})\geq L$.
Since $$e(S/I_{\partial\Delta})\geq 2\sum_{i=0}^{m-1}\binom{n-d+i}{i}+(d-2m)
\binom{n-d+m-1}{m-1},$$ it is enough to show that
$$ 2\sum_{i=0}^{m-1}\binom{n-d+i}{i}+(d-2m)
\binom{n-d+m-1}{m-1}\geq \frac{n\prod_{i=1}^{n-d} (m+i-1)}{(n-d+1)!}$$
which is to prove
$$2\binom{n-d+m}{m-1}+(d-2m)\binom{n-d+m-1}{m-1}\geq \frac{n\prod_{i=1}^{n-d} (m+i-1)}{(n-d+1)!}.$$
We need to show
\begin{eqnarray*}
2(n-d+m)\cdots(m+1)(m)+(d-2m)(n-d+m-1)\cdots(m+1)(m)(n-d+1)\\
\geq n(m)(m+1)\cdots(m+n-d-1)
\end{eqnarray*}
which further amounts to prove that
$2(n-d+m)+(d-2m)(n-d+1)\geq n$.
Notice that it is enough to show that $2(n-d+m)+(d-2m)\geq n$ which is true as $n >d.$
 \end{proof}
\begin{Corollary}\label{linmul} Let $\Delta$ be a linear ball. Then the simplicial sphere $\partial\Delta$
satisfies the
multiplicity conjecture.
\end{Corollary}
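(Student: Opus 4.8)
The plan is to deduce Corollary~\ref{linmul} from Theorem~\ref{mul}. Let $\Delta$ be the given linear ball, $\partial\Delta$ its boundary sphere, $n$ the number of vertices, $d-1=\dim\Delta$, and let $m-1$ be the least dimension of a nonface of $\Delta$. Since $K[\Delta]=S/I_\Delta$ has a linear resolution, $I_\Delta$ is generated in the single degree $m$, so every minimal nonface of $\Delta$ has dimension exactly $m-1$, and $m\ge 2$ because each vertex is a face. Thus it is enough to verify that $\partial\Delta$ satisfies the hypotheses (A1) and (A2) — which, taken together, also encode the numerical requirement $2\le m\le[(d+1)/2]$ — and then apply Theorem~\ref{mul}.

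For (A1) I would first invoke (F1). The $m$-linear resolution $0\to F'_{n-d}\to\cdots\to F'_1\to S\to S/I_\Delta\to 0$ has last term $F'_{n-d}=S(-(n-d+m-1))^{\beta'_{n-d,\,n-d+m-1}}$, so by (F1) the canonical module $\omega_\Delta$ is generated in the single degree $n-(n-d+m-1)=d-m+1$; by (P3) (Theorem~\ref{hoch}) this says that every minimal inside face of $\Delta$ has dimension exactly $d-m$, which is the first half of (A1). The second half — that $\Delta$ has no minimal inside face of dimension less than $m-1$ — together with $m\le[(d+1)/2]$ is then equivalent to the single inequality $d-m\ge m-1$. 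I would obtain this from the resolution of the Gorenstein ring $S/I_{\partial\Delta}$: by (P2) one has $M_i=n-m_{p-i}$ with $p=n-d+1$, while by (F2) the minimal shifts are strictly increasing, so $m_i\ge m_1+(i-1)$ and hence $M_i\le d-m_1+i$; combined with $m_i\le M_i$ this forces $m_1\le[(d+1)/2]$, where $m_1$ is the initial degree of $I_{\partial\Delta}$. Since $I_{\partial\Delta}$ is generated by the degree-$m$ generators inherited from $I_\Delta$ together with the degree-$(d-m+1)$ monomials $\overline{x_F}$, $F$ a minimal inside face, one has $m_1=\min(m,\,d-m+1)$, and the remaining point is that for a linear ball this minimum equals $m$, i.e.\ $d-m+1\ge m$. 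I expect this last step — showing that the initial degree of $I_\Delta$ cannot exceed the degree in which $\omega_\Delta$ is generated — to be the main obstacle.

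For (A2) I would compute the $h$-vector of $\partial\Delta$ directly. The short exact sequence $0\to\omega_\Delta\to K[\Delta]\to K[\partial\Delta]\to 0$ of Theorem~\ref{hoch}, together with the identity $F(\omega_\Delta,\lambda)=(-1)^dF(K[\Delta],\lambda^{-1})$ for the canonical module of a $d$-dimensional Cohen--Macaulay graded ring, yields
$$\sum_{j\ge 0}h_j(\partial\Delta)\,\lambda^j=\frac{\sum_{i=0}^{d}\bigl(h_i(\Delta)-h_{d-i}(\Delta)\bigr)\lambda^i}{1-\lambda},\qquad\text{so}\qquad h_j(\partial\Delta)=\sum_{i=0}^{j}\bigl(h_i(\Delta)-h_{d-i}(\Delta)\bigr).$$
Because $K[\Delta]$ has an $m$-linear resolution, $h_i(\Delta)=\binom{n-d-1+i}{i}>0$ for $0\le i\le m-1$ and $h_i(\Delta)=0$ for $i\ge m$. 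Hence for $0\le j\le d-m$ the cross terms vanish and $h_j(\partial\Delta)=\sum_{i=0}^{j}h_i(\Delta)$ is non-decreasing in $j$; by the symmetry $h_j(\partial\Delta)=h_{d-1-j}(\partial\Delta)$ of (P1), $h_j(\partial\Delta)$ is non-increasing for $m-1\le j\le d-1$. Since $m\le[(d+1)/2]$ gives $m-1\le d-m$, these two ranges cover $\{0,1,\dots,d-1\}$, so $h(\partial\Delta)$ is unimodal, which is (A2). With (A1), (A2) and $2\le m\le[(d+1)/2]$ in hand, Theorem~\ref{mul} gives $L\le e(S/I_{\partial\Delta})\le U$, which would complete the argument.
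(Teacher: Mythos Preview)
Your overall strategy—verify (A1) and (A2) for a linear ball and then invoke Theorem~\ref{mul}—is exactly the paper's. Your derivation that all minimal inside faces have dimension $d-m$ (via (F1) and Theorem~\ref{hoch}) and your computation of $h(\partial\Delta)$ from the formula $h'_j=\sum_{i\le j}(h_i-h_{d-i})$ are the same arguments the paper gives; the paper just writes out the resulting piecewise expression for $h'_i$ explicitly.

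The one place you diverge is the inequality $d-m\ge m-1$ (equivalently $m\le[(d+1)/2]$). The paper does not prove this: it simply writes ``since $d-m\ge m-1$,'' in effect invoking the standing hypothesis $2\le m\le[(d+1)/2]$ announced in the preamble to Theorem~\ref{mul}. Your attempt to derive it from the Gorenstein symmetry of the resolution of $K[\partial\Delta]$ is, as you already suspect, circular: the bound $m_1(I_{\partial\Delta})\le[(d+1)/2]$ is correct, but since $m_1=\min(m,\,d-m+1)$, it yields no information about $m$ unless you already know $m\le d-m+1$, which is precisely the claim in question. So this detour can be dropped. With $m\le[(d+1)/2]$ taken as part of the ambient setup (as the paper does), the remainder of your argument is complete and matches the paper's proof.
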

\begin{proof} We only need to show that the assumptions (A1) and (A2) are satisfied in this case.
Since $S/I_{\Delta}$ has a linear resolution, the minimal and maximal shifts in the
minimal free resolution of $S/I_{\Delta}$ are given by $m_i'=M_i'=m+i-1$ for $i=1,\ldots,n-d$. Hence
$\Delta$ has inside faces only of dimension $n-(m+n-d-1)-1=d-m$, by fact (F1) and
Theorem \ref{hoch}. Also, there is no inside face of dimension less than $m-1$ since $d-m\geq m-1$.
Hence the assumption (A1) is satisfied.
We now show that the
 $h$ vector $(h'_0,\ldots,h'_{d-1})$  of
$S/I_{\partial\Delta}$ is unimodal.
 As the Stanley-Reisner ideal $I_{\Delta}$ has linear resolution
and  $S=K[\Delta]=S/I_{\Delta}$ is Cohen-Macaulay, we get that the $h$-vector
$(h_0,\ldots,h_{d})$ of $S/I_{\Delta}$
 is given by $h_i=\binom{n-d+(i-1)}{i}$
for $i=0,\ldots,m-1$ and $h_i=0$ for $i \geq m$. \pagebreak[3]

Now the $h$-vector of $S/I_{\partial\Delta}$ is equal to (see \cite[p. 137]{ST})\;:
$$(h_0-h_d,h_0+h_1-h_d-h_{d-1},\ldots,h_0+\cdots+h_{d-1}-h_d-\cdots-h_1).$$
Hence the $h$-vector of $S/I_{\partial\Delta}$
is given by
$$
h^{'}_i =
\begin{cases}
\binom{n-d+i}{i} & \text{for $i=0,\ldots,m-2$;}\\\\
\binom{n-d+m-1}{m-1} & \text{for $i=m-1,\ldots, d-m$;}\\\\
\binom{n-d+(d-1-i)}{d-1-i} & \text{for $i=d-m+1,\ldots,d-1$.}
\end{cases}
$$
Hence the assumption (A2) also holds.
\end{proof}
\section{Determinantal Ideals} 

In this section, we study simplicial complexes arising from determinantal ideals. It is known that these
simplicial complexes are shellable. We prove that the geometric realization of these
simplicial complexes are balls and these balls are linear only in the case of the ideal of maximal minors.
We show that the boundary complexes of these simplicial complexes satisfy the multiplicity conjecture.
\pagebreak[3]

Let $X=(X_{ij})$, $i=1,\ldots,m$, $j=1,\ldots,n$, $m \leq n$ be an $m \times n$ matrix of indeterminates.
We denote by $[a_1,\ldots,a_r\ps|\ps b_1,\ldots,b_r]$, the minor $\det(X_{a_{i}b_{j}})$ of $X$ where
$i,j=1,\ldots,r$. Further we define
$$[a_1,\ldots,a_r\ps|\ps b_1,\ldots,b_r]\leq [a_1^{'},\ldots,a_s^{'}\ps|\ps b_1{'},\ldots,b_s^{'}],$$
if $r \geq s $ and $a_i\leq a_i^{'}$, $b_i\leq b_i^{'}$ for $i=1,\ldots,s$. Let $\Delta(X)$
denote the poset of minors of $X$. For $\sigma=[a_1,\ldots,a_r\ps|\ps b_1,\ldots,b_r]\in\Delta(X)$, 
we denote by $I_{\sigma}$ the ideal generated
by all minors $\gamma \not\geq\sigma$. We call such ideals determinantal ideals.
Notice that for $\sigma=[1,\ldots,r\ps|\ps 1,\ldots,r]$, $r \leq m-1$,
the ideal $I_{\sigma}$ is the ideal generated by all $(r+1)\times (r+1)$ minors of $X$. For 
$\sigma=[1,\ldots,r\ps|\ps 1,\ldots,r]$, $r \leq m-1$, we denote the ideal $I_{\sigma}$ by $I_r$. Note that 
the ideal $I_{m-1}$ is generated by all maximal minors of $X$.\pagebreak[3] 

Let the symbol $\tau$ denote the lexicographic term order on the polynomial ring
$S=K[X]=K[X_{ij},\;i=1,\ldots,m,\;j=1,\ldots,n]$ induced by
the variable order
$$X_{11}\geq X_{12}\geq \cdots \geq X_{1m} \geq X_{21} \geq X_{22} \cdots \geq X_{2m} \geq 
X_{n1} \geq X_{n2} \geq  \cdots \geq X_{mn}.$$ Notice that under the monomial order $\tau$,
the initial monomial of any minor of $X$ is the product of the elements of its main diagonal.
Such a monomial order is called diagonal order. 
In \cite{HT}, it is shown that the generators of $I_{\sigma}$ form a Gr\"obner basis and hence 
$I_{\sigma}^{*}$ of $I_{\sigma}$ with respect to the monomial order $\tau$,
is generated by squarefree monomials. In other words,
$K[X]/I_{\sigma}^{*}$ may be viewed as a Stanley-Reisner ring of a certain 
simplicial complex $\Delta_{\sigma}$. For $\sigma=[1,\ldots,r\ps|\ps 1,\ldots,r]$, $r \leq m-1$, we denote
the simplicial complex $\Delta_{\sigma}$ by $\Delta_{r}$.
\pagebreak[3]

We show in
Theorem \ref{minor}
that for any $\sigma=[a_1,\ldots,a_r\ps|\ps b_1,\ldots,b_r]\in\Delta(X)$, the geometric 
realization $|\Delta_{\sigma}|$ of the
simplicial complex $\Delta_{\sigma}$ is a shellable ball. By Theorem~\ref{minor} and Corollary \ref{linear} 
together, it follows that 
the geometric realization $|\Delta_{m-1}|$ of $\Delta_{m-1}$
is in fact a shellable linear ball. \pagebreak[3]

According to \cite{HT}, the facets of simplicial complex $\Delta_\sigma$ can be described as follows: 
its vertex set
is the set of coordinate points $V=\{(i,j): 1 \leq i \leq m, 1 \leq j \leq n\}.$ We define a partial order on  
$V$
by setting 
$(i,j) \leq (i^{'},j^{'})$ if $i \geq i^{'}$ and $j \leq j^{'}$.
A maximal chain in  $V$ will be called a {\it path}. \pagebreak[3]

\begin{Theorem}\cite[Theorem 3.3]{HT} Let $ \sigma=[a_1,\ldots,a_r\ps|\ps b_1,\ldots,b_r]$, and let $P_i=(a_i,n)$
and $Q_i=(m,b_i)$ for $i=1, \cdots,r$. Then the facets of $ \Delta_{\sigma}$
are the non-intersecting paths from $P_i$ to $Q_i$, that is, subsets 
$C_1\cup C_2 \cup  \cdots \cup C_r$ of $V$ where each $C_i$ is a path with end 
points $P_i$ and $Q_i$ and where $C_i \cap C_j =  \emptyset$ for all $ i \neq j$.
\end{Theorem}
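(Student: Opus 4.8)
The plan is to carry everything over to the vertex poset $(V,\leq)$, where $V=\{(i,j):1\leq i\leq m,\ 1\leq j\leq n\}$ with $(i,j)\leq(i',j')$ iff $i\geq i'$ and $j\leq j'$. By the Gr\"obner basis result of \cite{HT} recalled above, $I_\sigma^{*}=I_{\Delta_\sigma}$ is the squarefree monomial ideal generated by the main diagonals $X_{c_1d_1}\cdots X_{c_td_t}$ of the minors $[c_1,\ldots,c_t\ps|\ps d_1,\ldots,d_t]\not\geq\sigma$ (written with $c_1<\cdots<c_t$, $d_1<\cdots<d_t$). First I would establish the dictionary: the diagonal $\{(c_1,d_1),\ldots,(c_t,d_t)\}$ of a minor is precisely an antichain of $(V,\leq)$, every antichain arises this way, and --- directly from the definition of the order on minors --- $[c_1,\ldots,c_t\ps|\ps d_1,\ldots,d_t]\geq\sigma$ holds exactly when $t\leq r$ and $a_i\leq c_i$, $b_i\leq d_i$ for $i=1,\ldots,t$. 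Hence $F\subseteq V$ is a face of $\Delta_\sigma$ if and only if every antichain $\{(c_1,d_1),\ldots,(c_t,d_t)\}\subseteq F$, with the rows written in increasing order, satisfies $t\leq r$ together with $a_i\leq c_i$, $b_i\leq d_i$ for all $i$; call this condition $(\star)$. Writing $R_i:=\{(x,y):a_i\leq x\leq m,\ b_i\leq y\leq n\}$, the case $t=1$ of $(\star)$ shows every face is contained in $R_1$, and in general every face has all its antichains of length at most $r$. Note that the interval $[Q_i,P_i]$ of $(V,\leq)$ is exactly $R_i$, so a path from $P_i$ to $Q_i$ is the same thing as a maximal chain of $R_i$.

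The second step is to verify that every set $A=C_1\cup\cdots\cup C_r$ of the kind described is indeed a face. Let $\{p_1,\ldots,p_t\}\subseteq A$ be an antichain, the points ordered so that their rows (hence also their columns) increase. Since each $C_i$ is a chain it contains at most one $p_k$; therefore $t\leq r$, and we may write $p_k\in C_{j_k}$ with $j_1,\ldots,j_t$ pairwise distinct. Fix $i\leq t$. Among the $i$ distinct indices $j_1,\ldots,j_i\in\{1,\ldots,r\}$ the largest one, say $j^{*}$, satisfies $j^{*}\geq i$; the point $p_k$ lying on $C_{j^{*}}\subseteq R_{j^{*}}$ has row $\geq a_{j^{*}}\geq a_i$ and column $\geq b_{j^{*}}\geq b_i$, and since $k\leq i$ the same bounds hold for $p_i$. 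Thus $(\star)$ holds and $A$ is a face. Moreover $|A|=\sum_{i=1}^{r}\bigl((m-a_i)+(n-b_i)+1\bigr)$, which is independent of the chosen paths $C_i$; one recognizes this number as $mn-\height I_\sigma=\dim S/I_\sigma$, consistent with the Hilbert function being unchanged on passing to $I_\sigma^{*}$, but for the argument below only the fact that all the $A$'s have the same cardinality is used.

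Third, I would show that every face $F$ is contained in some $A$ of this form. Together with the previous two steps this forces the $A$'s to be exactly the facets: each $A$ is then maximal (anything containing it lies inside some $A'$, necessarily equal to $A$ since all the $A$'s have the same cardinality), and conversely every facet, being contained in some $A$, must equal it. To prove the containment: by $(\star)$, $F\subseteq R_1$ and $F$ has no antichain of length exceeding $r$, so Dilworth's theorem writes $F$ as a disjoint union of $w\leq r$ chains $D_1,\ldots,D_w$. One then assigns these chains injectively to rectangles $R_{\pi(1)},\ldots,R_{\pi(w)}$ with $D_\ell\subseteq R_{\pi(\ell)}$ --- a Hall--type marriage condition that is checked using $(\star)$ for antichains of length $>1$ --- adds empty chains for the unused indices, and completes each $D_\ell$ inside $R_{\pi(\ell)}$ to a maximal chain from $P_{\pi(\ell)}$ to $Q_{\pi(\ell)}$ so that the completed chains remain pairwise disjoint; this produces the desired $A\supseteq F$. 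An alternative is to run this step by induction on $r$, peeling off one outermost path $C_1\subseteq R_1$ at a time and passing to $\sigma'=[a_2,\ldots,a_r\ps|\ps b_2,\ldots,b_r]$.

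The step I expect to be the real obstacle is the completion in the third step: turning an abstract Dilworth decomposition of a face into a system of \emph{pairwise non-intersecting} maximal chains with the prescribed endpoints $P_i$ and $Q_i$. This non-crossing lattice-path bookkeeping --- of Lindstr\"om--Gessel--Viennot flavour --- is the genuine combinatorial content of \cite[Theorem 3.3]{HT}, and it is also essentially what makes $\Delta_\sigma$ shellable; by contrast the dictionary of the first step and the cardinality computation of the second are routine.
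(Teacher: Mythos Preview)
The paper does not give its own proof of this theorem: it is quoted verbatim as \cite[Theorem 3.3]{HT} and used as a black box throughout Section~2, so there is nothing in the paper to compare your argument against.

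Your outline is sound as far as it goes. The dictionary in your first step and the verification in the second step that each non-intersecting family $A=C_1\cup\cdots\cup C_r$ is a face satisfying $(\star)$ are both correct; in particular your pigeonhole argument for the row/column bounds on $p_i$ via the maximal index $j^{*}$ among $j_1,\ldots,j_i$ is clean. You are also right to flag the third step as the genuine obstacle. The Dilworth decomposition of a face $F$ into at most $r$ chains is immediate, but neither the Hall-type assignment of chains $D_\ell$ to rectangles $R_{\pi(\ell)}$ nor --- more seriously --- the simultaneous extension of the $D_\ell$ to pairwise \emph{disjoint} maximal chains with the prescribed endpoints is routine; a naive greedy completion can easily produce crossings. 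In \cite{HT} this is handled not by Hall's theorem but by an explicit inductive construction (essentially your ``peel off an outermost path $C_1$ and pass to $\sigma'=[a_2,\ldots,a_r\,|\,b_2,\ldots,b_r]$'' alternative), where the outermost chain is taken canonically so that disjointness is automatic at each stage. If you pursue your write-up, that inductive variant is the one that actually closes the gap; the marriage-theorem route would require an additional uncrossing argument that you have not supplied.
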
\pagebreak[3]

We denote the set of facets of $\Delta_{\sigma}$ by $\mathcal{F}(\Delta_{\sigma})$.
The complex $\Delta_{\sigma}$ has a natural partial order on the set of facets which we recall from \cite[Theorem 4.9]{HT}: 
Let $F_1$
and $F_2$ be two facets of $\Delta_{\sigma}$. We write $F_1=\bigcup_{i=1}^{r}C_i$ and $F_2=\bigcup_{i=1}^{r}D_i$
as unions of non-intersecting paths with end points $P_i$ and $Q_i$.  We say that $F_2 \geq F_1$, if
$D_i$ is contained in the upper right side of $C_i$ for all $i=1,\ldots,r$, that is, if for
each $(x,y) \in D_i$ there is some $(u,v) \in C_i$  such that $u \leq x$  and $v \leq y$, where 
$i=1,\ldots,r$. 
This is a partial order
on the facets of $\Delta_{\sigma}$, and this partial order extended to any linear order gives us a shelling.
We fix a linear order and let $\Sigma$ denotes the  corresponding shelling.
From \cite[Corollary 5.18]{BV}, we have $\dim(S/I_{\sigma}^{*})=r(m+n+1)-\sum_{i=1}^{r}(a_i+b_i)$.\pagebreak[3]

Before stating the next theorem, we define the notion of a ${\it corner }$ of a path. Let $C$
be a path in $V$. A point $(i,j) \in C$ will be called a ${ \it corner}$ of $C$, if $(i-1,j)$
and $(i,j-1)$ belong to $C$.  Let $F$ be a facet of $\Delta_{\sigma}$, then we denote  by $\mathcal{C}(F)$,
the set of corners of the paths in $F$, and we define $c(F)=|\mathcal{C}{(F)}|$. \pagebreak[3]

For the proof of Theorem \ref{minor}, we need the following lemma from algebraic topology:
\begin{Lemma}\label{topology} Let $E_{1}$ be a simplicial complex  whose 
geometric realization $|E_1|$ is a ball of dimension $d$, and let $E_{2}$ be 
a simplex of dimension $d$.
Let the intersection $E_1 \cap E_2 =\langle G_1,\ldots,G_r \rangle \neq \emptyset$, where 
$G_1,\ldots,G_r$ are facets of the 
boundary complexes $\partial E_i$
of $E_i$, $i=1,2$ and $\langle G_1,\ldots,G_r \rangle$ is a proper subset of $\partial E_2$.
 Then the geometric realization $|E_1 \cup E_2|$  of $E_1 \cup E_2$ is again a ball.
\end{Lemma}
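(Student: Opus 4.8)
The plan is to isolate the topological content and reduce to a classical gluing principle. Here $|E_2|$ is a $d$-simplex, hence a PL $d$-ball, and $|E_1|$ is a PL $d$-ball by hypothesis (in the intended applications $E_1$ is shellable, so ``ball'' may be read in the PL sense throughout). Since $E_1\cap E_2$ is a common subcomplex, $|E_1|\cap|E_2|=|E_1\cap E_2|=|\langle G_1,\ldots,G_r\rangle|$, and by hypothesis this subcomplex lies in both $\partial E_1$ and $\partial E_2$. Hence, once we know that $|\langle G_1,\ldots,G_r\rangle|$ is a PL $(d-1)$-ball, the assertion is exactly the classical fact that the union of two PL $d$-balls meeting along a PL $(d-1)$-ball contained in the boundary of each is again a PL $d$-ball; I would invoke this and conclude.

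First I would verify that $\langle G_1,\ldots,G_r\rangle$ is a PL $(d-1)$-ball. More precisely, I would prove by induction on $k$ that any subcomplex of the boundary of a $k$-simplex generated by a nonempty proper set of its facets is a PL $(k-1)$-ball; the case $k=1$ is a single vertex. For the inductive step note that the $G_i$ are $r$ of the $d+1$ facets of $E_2$ with $1\le r\le d$, so some facet $H_0$ of $E_2$ is omitted; letting $v$ be the vertex of $E_2$ opposite $H_0$, every $G_i$ is distinct from $H_0$ and hence contains $v$, so $\langle G_1,\ldots,G_r\rangle=v\ast\Gamma$ with $\Gamma=\langle G_1\setminus\{v\},\ldots,G_r\setminus\{v\}\rangle$ a subcomplex of $\partial H_0$ generated by $r\le d$ of the $d$ facets of the $(d-1)$-simplex $H_0$. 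If $r=d$ then $\Gamma=\partial H_0$, a PL $(d-2)$-sphere; if $r<d$ then $\Gamma$ is a PL $(d-2)$-ball by the inductive hypothesis. In either case the cone $v\ast\Gamma$ is a PL $(d-1)$-ball. Since the $G_i$ are also facets of $\partial E_1$, the same subcomplex sits inside $\partial E_1$ as well, so the gluing principle applies with $B_1=|E_1|$, $B_2=|E_2|$ and $B=|\langle G_1,\ldots,G_r\rangle|$.

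Finally I would recall why that gluing principle holds. Given PL $d$-balls $B_1,B_2$ with $B_1\cap B_2=B$ a PL $(d-1)$-ball lying in $\partial B_1\cap\partial B_2$, one chooses PL homeomorphisms $\phi_i\colon B_i\to I^d$ carrying $B$ onto a fixed facet $F$ of the cube; this uses the unknottedness of a codimension-zero PL ball in the boundary sphere of a PL ball. Replacing $\phi_2$ by $\bar h\circ\phi_2$, where $\bar h\colon I^d\to I^d$ is any PL extension of the self-homeomorphism $h=\phi_1\circ\phi_2^{-1}|_F$ of $F$ (for instance $h\times\id$ in a product structure $I^d=F\times I$), one may assume $\phi_1|_B=\phi_2|_B$, and then $B_1\cup B_2\iso I^d\cup_F I^d$, the union of two cubes along a common facet, which is again a cube. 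The one point that deserves care, and the main obstacle, is exactly this last ingredient: the unknottedness of $B$ in $\partial E_1$ together with the reconciliation of the two boundary identifications. This is standard piecewise-linear topology, which I would cite rather than reprove; it is also the reason the hypotheses must be read in the PL category.
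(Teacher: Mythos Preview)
The paper does not actually prove this lemma; it simply introduces it with the phrase ``we need the following lemma from algebraic topology'' and moves on, treating it as a known fact. Your proposal, by contrast, supplies a complete argument, and it is correct.

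Two brief remarks. First, your inductive proof that $\langle G_1,\ldots,G_r\rangle$ is a PL $(d-1)$-ball is clean and is the essential combinatorial content here; an alternative one-line justification is that any nonempty proper collection of facets of a simplex is trivially shellable, and a shellable $(d-1)$-pseudomanifold with nonempty boundary is a PL ball. Second, your caveat about reading ``ball'' in the PL sense is well placed: the gluing principle you invoke is a PL (not merely topological) statement, and in the paper's inductive applications $E_1$ is always obtained by attaching simplices along boundary faces, so it is a PL ball at every stage. Thus the PL hypothesis is available exactly when the lemma is used, and your argument is fully rigorous in that context.
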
\pagebreak[3]

The following lemma follows 
from the proof of \cite[Theorem 2.4]{BAH}.

\begin{Lemma}\label{late} Let $\Delta_{\sigma}=\langle F_1,\ldots,F_t \rangle$ be the simplicial complex 
with Stanley-Reisner ideal $I_{\sigma}$ where  $F_1,\ldots,F_t$ is the shelling order $\Sigma$. 
Let 
$\Delta_{i}=\langle F_1,\ldots, F_i \rangle$ and let $G=F_k\setminus\{v\}$ for some $v \in F_k$, $k \leq i$.
Then $G\subset F_{\ell}$ for some $\ell< k$
if and only if $v \in \mathcal{C}(F_k)$. If the equivalent conditions hold then $F_{\ell}$
is uniquely determined.
\end{Lemma}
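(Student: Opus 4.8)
The plan is to work out, for a facet $F_k=C_1\cup\cdots\cup C_r$ written as a union of pairwise disjoint paths with $C_i$ running from $P_i=(a_i,n)$ to $Q_i=(m,b_i)$, and for a vertex $v\in C_s$, exactly which facets of $\Delta_\sigma$ contain the codimension-one face $G=F_k\setminus\{v\}$ and where they sit in the shelling order $\Sigma$. As $\Delta_\sigma$ is shellable it is pure, so all facets have the same number of vertices; hence any facet $F\supseteq G$ with $F\neq F_k$ has the form $F=G\cup\{x\}$ for a single vertex $x\notin G$, and is obtained from $F_k$ by deleting $v$ and inserting $x$. It therefore suffices to prove: (i) if $v\notin\mathcal{C}(F_k)$ then every facet $F\neq F_k$ containing $G$ satisfies $F>F_k$ in the partial order on $\mathcal{F}(\Delta_\sigma)$ of \cite[Theorem 4.9]{HT}, hence follows $F_k$ in $\Sigma$; and (ii) if $v\in\mathcal{C}(F_k)$ there is exactly one facet $F'\neq F_k$ with $G\subseteq F'$ and $F'<F_k$, and all remaining facets containing $G$ follow $F_k$.

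If $v$ is the endpoint $P_s$ or $Q_s$ of $C_s$, then $v$ is an endpoint of the constituent path with these ends in every facet containing $G$, so $v$ lies in every such facet; since $v\notin G$ this forces the facet to equal $G\cup\{v\}=F_k$. Such a $v$, being extreme in the chain $C_s$, is not a corner, so both sides of the asserted equivalence are false. From now on $v=(v_1,v_2)$ is interior to $C_s$, with chain-neighbours $u$ and $w$.

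Suppose $v\in\mathcal{C}(F_k)$, i.e.\ $u=(v_1,v_2-1)$ and $w=(v_1-1,v_2)$; then $v$ is the rightmost vertex of $C_s$ in row $v_1$. Consider the diagonal vertices $w_i=(v_1-i,v_2-i)$ with $w_0=v$, and run the following cascade: if $w_{i+1}$ lies in no $C_t$, stop; otherwise $w_{i+1}$ lies in some (necessarily unique) $C_t$, and, using that the chain-neighbours of $w_i$ in the path currently holding it occupy the cells immediately below and immediately to the right of $w_{i+1}$, one checks by induction (the base case being the hypothesis $v\in\mathcal{C}(F_k)$) that $w_{i+1}$ is again a corner of $C_t$, with chain-neighbours $(v_1-i-1,v_2-i-2)$ and $(v_1-i-2,v_2-i-1)$; then continue. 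A diagonal vertex in the top row or the left column cannot be a corner, so the cascade stops at some $w_q$ lying in no $C_t$. Replacing, in each path met, the corner by the next diagonal vertex turns $C_1,\dots,C_r$ into a new family of pairwise disjoint paths with the same endpoints, hence a facet $F'$ by \cite[Theorem 3.3]{HT}; on vertex sets $F'=(F_k\setminus\{v\})\cup\{w_q\}$, so $G\subseteq F'$. Each affected path has one vertex replaced by a coordinatewise smaller one, so each vertex of $F_k$ dominates a vertex of $F'$ in the corresponding path, i.e.\ $F_k\ge F'$; thus $F'<F_k$ and $F'$ precedes $F_k$ in $\Sigma$. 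This proves the ``if'' direction and exhibits the required $F_\ell$.

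It remains to see that a non-corner interior $v$ admits no earlier facet containing $G$, and that for a corner $v$ the facet $F'$ just built is the only earlier one. The key point is that any facet $F\neq F_k$ containing $G$ is obtained from $F_k$ by a diagonal cascade of corner slides exactly as above, run in a single direction: up-left (decreasing both coordinates) when the turn of $C_s$ at $v$ has neighbours $(v_1,v_2-1),(v_1-1,v_2)$, and down-right when they are $(v_1+1,v_2),(v_1,v_2+1)$; and if $C_s$ runs straight through $v$ no cascade is possible, so $F=F_k$. Granting this, an up-left cascade — the only kind producing a facet below $F_k$ — is available precisely when $v$ is a corner, and it yields the single facet $F'$; hence there is a unique earlier $F_\ell$ exactly when $v\in\mathcal{C}(F_k)$, and then $F_\ell=F'$. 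The step that needs genuine care is this structural fact: that deleting $v$ and inserting $x$ can be realized only by a diagonal chain of corner slides, the slid vertices being forced and no rerouting off the diagonal admissible. This is the combinatorics one extracts from the proof of \cite[Theorem 2.4]{BAH}; it can alternatively be obtained from the non-crossing structure of the path families in \cite{HT}, which forces the constituent path through $P_s$ in $F$ to contain all of $C_s\setminus\{v\}$.
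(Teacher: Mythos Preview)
The paper itself gives no proof of this lemma at all; it simply records that the statement ``follows from the proof of \cite[Theorem 2.4]{BAH}''. Your proposal is therefore not competing with an argument in the paper but rather unpacking what that citation is meant to deliver.

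Your write-up is largely sound and does more than the paper. The cascade construction for the ``if'' direction is correct: the inductive verification that each $w_{i+1}$ hit by the diagonal is again a corner of its path (because its down- and right-neighbours lie on $C_{s_i}$ and are therefore excluded from $C_t$), that $w_{i+1}$ cannot be an endpoint, that the process terminates inside the grid, and that the resulting family is still pairwise disjoint with the same endpoints, all check out. The comparison $F'<F_k$ in the partial order of \cite[Theorem 4.9]{HT} is also argued correctly, which is what you need since $\Sigma$ is a linear extension of that partial order.

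For the ``only if'' direction and uniqueness you ultimately defer to \cite[Theorem 2.4]{BAH}, which is exactly what the paper does. It is worth saying plainly what that theorem gives: the restriction face of $F_k$ in the shelling $\Sigma$ is $\mathcal{C}(F_k)$. From this the equivalence is immediate, since $G=F_k\setminus\{v\}$ lies in $\Delta_{k-1}$ iff it fails to contain the restriction face, i.e.\ iff $v\in\mathcal{C}(F_k)$. Uniqueness then reduces to the structural claim you isolate---that $G$ lies in at most two facets of $\Delta_\sigma$ altogether, namely $F_k$ and the single facet produced by your cascade---and this is precisely the combinatorics behind \cite[Theorem 2.4]{BAH}. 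One small caution: your final sentence asserts that the non-crossing structure ``forces the constituent path through $P_s$ in $F$ to contain all of $C_s\setminus\{v\}$''. That is the right heuristic, but for $r>1$ it is not literally what happens in the cascade (the new $D_s$ picks up $w_1$, which lay on a different $C_t$), so it would be cleaner to phrase the deferred fact as you did earlier: any facet containing $G$ arises from $F_k$ by a diagonal cascade in a single, forced direction.
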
\pagebreak[3]
\begin{Theorem}\label{minor} For any $\sigma=[a_1,\ldots,a_r\ps|\ps b_1,\ldots,b_r]\in\Delta(X)$, the geometric 
realization $|\Delta_{\sigma}|$ of the
simplicial complex $\Delta_{\sigma}$ is a shellable ball of dimension
$r(m+n+1)-\sum_{i=1}^{r}(a_i+~b_i)-1$. 
\end{Theorem}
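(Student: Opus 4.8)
The plan is to prove both assertions at once by induction along the given shelling $\Sigma=(F_1,\dots,F_t)$ of $\Delta_\sigma$, showing that every initial segment $\Delta_i=\langle F_1,\dots,F_i\rangle$ is a ball of dimension $d:=r(m+n+1)-\sum_{i=1}^r(a_i+b_i)-1$, and then specializing to $i=t$; the shellability part is already the content of \cite[Theorem 4.9]{HT} recalled above. First I would record that $\Delta_\sigma$ is pure of dimension $d$: each facet is a union of $r$ pairwise non-intersecting paths $C_i$ running from $P_i=(a_i,n)$ to $Q_i=(m,b_i)$, and such a path has exactly $(m-a_i)+(n-b_i)+1$ vertices, so every facet has $\sum_i[(m-a_i)+(n-b_i)+1]=r(m+n+1)-\sum(a_i+b_i)$ vertices; equivalently this is $\dim K[X]/I_\sigma^{*}=r(m+n+1)-\sum(a_i+b_i)$ from \cite[Corollary 5.18]{BV}. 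The base case $i=1$ is immediate, since $\Delta_1=\langle F_1\rangle$ is a $d$-simplex and hence a $d$-ball.

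For the inductive step I would fix $i\geq 2$, assume $\Delta_{i-1}$ is a $d$-ball, and apply Lemma \ref{topology} with $E_1=\Delta_{i-1}$ and $E_2=\langle F_i\rangle$, so that $|\Delta_i|=|\Delta_{i-1}\cup\langle F_i\rangle|$ will be a $d$-ball once the three hypotheses of that lemma are verified. Since $(F_1,\dots,F_t)$ is a shelling of the pure complex $\Delta_\sigma$, for $i\geq 2$ the subcomplex $\Delta_{i-1}\cap\langle F_i\rangle$ is nonempty and pure of dimension $d-1$; by Lemma \ref{late} its facets are exactly the faces $G_j:=F_i\setminus\{v_j\}$ with $v_j$ ranging over the corner set $\mathcal{C}(F_i)=\{v_1,\dots,v_s\}$, so $s=c(F_i)\geq 1$ and $\Delta_{i-1}\cap\langle F_i\rangle=\langle G_1,\dots,G_s\rangle\neq\emptyset$. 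Each $G_j$ is a codimension-one face of the simplex $F_i$, hence a facet of $\partial\langle F_i\rangle$; and by the uniqueness clause of Lemma \ref{late} it is contained in exactly one $F_\ell$ with $\ell<i$, which (as $F_1,\dots,F_{i-1}$ are the facets of $\Delta_{i-1}$, being distinct faces of the common dimension $d$) means $G_j$ lies in exactly one facet of the $d$-ball $\Delta_{i-1}$, so it is a facet of $\partial\Delta_{i-1}$. Finally $\langle G_1,\dots,G_s\rangle$ is a proper subcomplex of $\partial\langle F_i\rangle$: the starting point $P_1$ of the first path is a vertex of $F_i$ that is never a corner (all points of $C_1$ lie weakly below and to the left of $P_1$, so no point of $C_1$ sits above it), whence $s=c(F_i)<|F_i|$ and not every facet of $\partial\langle F_i\rangle$ occurs among the $G_j$. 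Lemma \ref{topology} now yields that $|\Delta_i|$ is a $d$-ball, completing the induction; taking $i=t$ gives the theorem.

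The hard part is the bookkeeping feeding Lemma \ref{topology}: one has to be certain that the attaching locus $\Delta_{i-1}\cap\langle F_i\rangle$ is exactly a nonempty union of faces common to the two boundary complexes $\partial\Delta_{i-1}$ and $\partial\langle F_i\rangle$, and that it never exhausts $\partial\langle F_i\rangle$. The first point is where Lemma \ref{late} earns its keep — both the characterization of codimension-one faces reused from earlier facets via corners and, crucially, its uniqueness assertion, which is what makes each $G_j$ a genuine boundary facet of $\Delta_{i-1}$ rather than an interior face; purity of the shelling restriction guarantees there is nothing of lower dimension to worry about. The second point is where the elementary observation that endpoints of the paths are not corners (so $c(F_i)<|F_i|$) is indispensable: without it the step could glue a simplex along its entire boundary and produce a sphere. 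Once these are in place the induction is routine.
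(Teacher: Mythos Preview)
Your proof is correct and follows essentially the same approach as the paper: induction along the shelling $\Sigma$, verifying at each step the hypotheses of Lemma~\ref{topology} via Lemma~\ref{late} (both the corner characterization and its uniqueness clause), and exhibiting a non-corner vertex of $F_i$ to ensure the attaching locus is proper in $\partial\langle F_i\rangle$. The only cosmetic difference is that you name the specific non-corner vertex $P_1=(a_1,n)$, while the paper simply observes that not every vertex of $F_i$ is a corner.
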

\begin{proof} The fact that the
dimension of the simplicial complex $\Delta_{\sigma}$
is $r(m+n+1)-\sum_{i=1}^{r}(a_i+~b_i)-1$ follows from 
\cite[Corollary 5.18]{BV}.
Let $\Delta_{\sigma}=\langle F_1,\ldots,F_t \rangle$ where $F_1,\ldots,F_t$ is the shelling order
$\Sigma$.
Let $\Delta_{i}=\langle F_1, \ldots, F_i \rangle$. 
We prove that $|\Delta_{i}|$ is a ball by induction on $i$. Assume that $|\Delta_{i-1}|$
is a ball, we will show that $|\Delta_{i}|$ is a ball. We have
$\Delta_{i}=\Delta_{i-1} \cup \langle F_{i} \rangle$,
let $\Delta_{i-1} \cap \langle F_{i} \rangle = \langle G_1, \ldots, G_r \rangle$. Notice that
$G_j$
are codimension one faces of $\Delta_{i-1}$ as $\Delta_{\sigma}$ is shellable. 
By Lemma \ref{topology}, we notice that $|\Delta_{i}|$ is a ball
(assuming that $|\Delta_{i-1}|$ is a ball), if the following two conditions are satisfied:
\begin{enumerate}
\item Each $G_j$ is a subset of exactly one $F_k$ for $k \leq i-1$, which in turn implies
that $G_{j} \in \partial{\Delta_{i-1}}$,
\item $G_1, \ldots, G_r$ is a proper subset of the boundary complex $\partial \langle F_{i}\rangle$ of 
$\langle F_{i}\rangle$.
\end{enumerate}

The first condition follows from Lemma \ref{late}.
For the second condition, we define $G_{v}=F_{i} \setminus \{v\}$ where 
${v} \notin \mathcal{C}(F_{i})$ (Notice that such a $v$ exists as not all points in $F_{i}$
are corner points of $F_{i}$). Then again from Lemma \ref{late}, there exists no $F_{j}$, $j\leq i-1$ 
such that $G_v = F_{j} \cap F_{i}$. Hence 
$G_v \subset \partial \langle F_{i}\rangle$ and $G_v \neq G_j$ for $j=1,\ldots, r$.
\end{proof}

An ideal $I \subset S$ generated in degree $d$ is said to have a linear resolution if 
in the minimal free resolution of $I$, one has the maximal shifts $M_i=d+i$ for all $i$.
It is known that the ideal $I_{m-1}$ generated by the maximal minors of matrix $X$
has a linear resolution. In fact, the Eagon--Northcott complex  gives a minimal free 
resolution for $I_{m-1}$, see \cite[Theorem 2.16]{BV}.  We have the following :

\begin{Corollary}\label{linear} Let $\Delta_r$ be the simplicial complex with the Stanley-Reisner Ideal
$I_r^*$. Then $|\Delta_r|$ is a linear ball if and only if $r=m-1$.
\end{Corollary}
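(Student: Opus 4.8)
The plan is to characterize, in terms of the combinatorial data of $\Delta_r$, exactly when the Stanley--Reisner ring $K[\Delta_r]$ has a linear resolution, and to match this with the condition $r = m-1$. By Theorem \ref{minor} we already know that $|\Delta_r|$ is a shellable ball, so the only issue is linearity of the resolution. Recall that $I_r^*$ is the initial ideal of $I_r$ with respect to the diagonal term order $\tau$. Since passing to an initial ideal cannot decrease any graded Betti number, one always has $\beta_{i,j}(S/I_r) \le \beta_{i,j}(S/I_r^*)$; in particular, if $S/I_r^*$ has a linear resolution then so does $S/I_r$. The degree of the generators of $I_r$ (and of $I_r^*$, since the initial monomials of the $(r+1)\times(r+1)$ minors are squarefree of degree $r+1$) is $r+1$.

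For the ``if'' direction ($r = m-1$), I would argue as follows. The ideal $I_{m-1}$ of maximal minors of $X$ has a linear resolution over $K[X]$: the Eagon--Northcott complex is its minimal free resolution, as recorded just before the statement via \cite[Theorem 2.16]{BV}, and it is linear because all its maps in the relevant range are given by matrices of linear forms. Thus $\beta_{i,j}(S/I_{m-1}) = 0$ unless $j = m + i - 1$ (with the generators in degree $m$). One then needs linearity of the resolution of the \emph{initial} ideal $I_{m-1}^*$, not just of $I_{m-1}$ itself. Here the cleanest route is a Betti-number count: the graded Betti numbers of the Eagon--Northcott complex are explicitly known, the Hilbert series is preserved under taking initial ideals, and since $\Delta_{m-1}$ is Cohen--Macaulay of the same dimension, equality of Hilbert series together with $\beta_{i,j}(S/I_{m-1}) \le \beta_{i,j}(S/I_{m-1}^*)$ forces equality of all Betti numbers. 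Hence $S/I_{m-1}^*$ has a linear resolution and $|\Delta_{m-1}|$ is a linear ball. (Alternatively, one may invoke that the diagonal initial complex of maximal minors is shellable with all minimal nonfaces of the same cardinality and verify the linear-quotients property directly, but the Betti-count argument is shorter.)

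For the ``only if'' direction, suppose $r \le m-2$ and show $S/I_r^*$ does \emph{not} have a linear resolution. The key point is to exhibit a minimal generator of $I_r$ (equivalently, a nonface of $\Delta_r$) forcing a non-linear syzygy, or more simply to compute the top of the resolution: since $\Delta_r$ is a ball whose boundary sphere is Gorenstein, the canonical module of $K[\Delta_r]$ is generated by the monomials $\overline{x_F}$ with $F$ an inside face, by (P3)/Theorem \ref{hoch}, and its generator degrees are $n_{\mathrm{vars}} - j$ with $\beta_{n-d,j}(S/I_r^*) \ne 0$ (fact (F1)); the dimension formula $\dim(S/I_r^*) = r(m+n+1) - \sum(a_i+b_i)$ from \cite[Corollary 5.18]{BV} specializes, for $\sigma = [1,\dots,r\,|\,1,\dots,r]$, to an explicit value. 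When $r < m-1$ the lowest-dimensional inside face of $\Delta_r$ and the highest-dimensional one do \emph{not} have complementary dimensions matching a linear resolution: concretely, if the resolution of $S/I_r^*$ were linear, then by the argument in Corollary \ref{linmul} the inside faces of $\Delta_r$ would all have dimension $d-m = d-(r+1)$, whereas the path-and-corner description of the facets (the theorem of \cite{HT} quoted above) produces inside faces of varying dimension once $r \le m-2$, because one can insert or omit corner points in a path independently across the $r$ non-intersecting strands. Pinpointing a single explicit inside face of the ``wrong'' dimension — e.g. using a facet with a controlled number of corners and removing a non-corner vertex — contradicts linearity.

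The main obstacle, and the step I would spend the most care on, is the ``only if'' direction: making rigorous the claim that for $r \le m-2$ the complex $\Delta_r$ has inside faces of at least two distinct dimensions (equivalently, that $S/I_r^*$ has generators of the canonical module in more than one degree, so that $\beta_{n-d,j} \ne 0$ for two values of $j$, which immediately breaks linearity). This requires a hands-on analysis of the non-intersecting lattice-path model: one must produce two facets $F, F'$ of $\Delta_r$ and codimension-one inside faces inside them of different dimensions, or show directly that the minimal inside faces are not all $(d-m)$-dimensional. I expect this to reduce to a small, explicit combinatorial lemma about corners of paths in the $r$-strand model, after which the equivalence follows by combining it with the linear-resolution criterion used in Corollary \ref{linmul}.
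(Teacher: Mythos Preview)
Your ``if'' direction has the right ingredients but the key inference is misstated. The claim ``equality of Hilbert series together with $\beta_{i,j}(S/I_{m-1}) \le \beta_{i,j}(S/I_{m-1}^*)$ forces equality of all Betti numbers'' is false in general, even when both rings are Cohen--Macaulay of the same dimension: consecutive homological degrees can absorb equal increments without changing the alternating sum $\sum_i(-1)^i\beta_{i,j}$ that the Hilbert series controls. The paper closes this gap by passing to an Artinian reduction. Since $S/I_{m-1}$ and $S/I_{m-1}^*$ are both Cohen--Macaulay, one mods each out by a maximal regular sequence of linear forms; the reductions are zero-dimensional with the same Hilbert function. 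A zero-dimensional quotient with a $d$-linear resolution must be $\bar S/\bar\mm^{\,d}$, and \emph{any} zero-dimensional quotient with that particular Hilbert function is forced to equal $\bar S/\bar\mm^{\,d}$. Hence $\overline{I_{m-1}^*}$ is again a power of the maximal ideal, so it (and therefore $I_{m-1}^*$) has a linear resolution. Your Cohen--Macaulay hypothesis is essential, but it enters through this Artinian-reduction argument, not through a Betti-number count.

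For the ``only if'' direction your plan would eventually succeed, but it is the long way around. Showing combinatorially that for $r\le m-2$ the minimal inside faces of $\Delta_r$ occur in more than one dimension is precisely the content of Proposition~\ref{cor} and Corollary~\ref{total} later in the paper, which require a genuine analysis of non-flippable facets in the lattice-path model. The paper's proof of the present corollary bypasses all of this: since $\beta_{i,j}(I_r^*)\ge\beta_{i,j}(I_r)$, it suffices to show that $I_r$ itself fails to be linear for $r<m-1$, and this drops out of the known $a$-invariant $a(S/I_r)=-nr$ \cite[Corollary~1.5]{BAH} together with $\projdim S/I_r=(m-r)(n-r)$. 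By (F1) the top maximal shift is $M_{(m-r)(n-r)}(S/I_r)=mn-rn$, so the last shift of $I_r$ exceeds its index by $r(m-r)+1$, strictly larger than the generating degree $r+1$ whenever $m-r\ge 2$. This is a two-line argument using off-the-shelf invariants of determinantal rings, whereas your route requires developing the corner/flip machinery first.
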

\begin{proof} First we show that $|\Delta_{m-1}|$ is a linear ball i.e. we show that
the Stanley Reisner ideal $I_{m-1}^*$ has a linear resolution. As stated before, we know that the ideal
$I_{m-1}$ has a linear resolution. Moreover, the ring $S/I_{m-1}$ is Cohen-Macaulay,
see \cite[Theorem 2.8]{BV}. Now as $\Delta_{m-1}$ is shellable, the ring 
$S/I_{m-1}^*$ is also Cohen-Macaulay. From here it follows, that the Stanley-Reisner ideal
$I_{m-1}^*$ also has a linear resolution. Indeed, note that $S/I_{m-1}$ 
and $S/I_{m-1}^*$ have the same Hilbert function. Let 
$\dim S/I_{m-1}=~\dim S/I_{m-1}^*=~d$. Let $y_1,\ldots,y_d$ and $y'_1,\ldots,y'_d$ be the
maximal regular sequences of linear forms in $S/I_{m-1}$ and in $S/I_{m-1}^*$,
respectively. Then $\overline{S}/\overline{I_{m-1}}$ is zero dimensional \big(here 
$\overline{\phantom{1}}$ denotes
modulo the sequence $(y_1,\ldots,y_d)$\big) and has a linear
resolution. This is only possible if $\overline{I_{m-1}}$ is a power of the maximal ideal  of 
$\overline{S}$.  Now the zero dimensional ring $\overline{S}/\overline{I_{m-1}^*}$
\big(here $\overline{\phantom{1}}$ denotes modulo the sequence $(y'_1,\ldots,y'_d)$\big)
has the same Hilbert function as $\overline{S}/\overline{I_{m-1}}$. This is only
possible if $\overline{I_{m-1}^*}$ is the same power of the maximal ideal as
$\overline{I_{m-1}}$. In particular, $\overline{I_{m-1}^*}$ has linear resolution, and therefore
$I_{m-1}^*$ has a linear resolution.

Now we show that $I_r^*$ does not have a linear resolution for $r \neq m-1$. Notice that it is enough to show 
that $I_r$ does not have linear resolution for $r \neq m-1$, since
$\beta_{ij}(I_r^*)\geq \beta_{ij}(I_r)$. The $a$-invariant of the ring $S/I_{r}$ is equal to
$-nr$ i.e. the minimum of the degree of generators of the canonical module of $S/I_r$ is
given by $nr$, see \cite[Corollary 1.5]{BAH}. As the projective dimension of
$S/I_r$ is given by $(m-r)(n-r)$\cite[Corollary 5.18]{BV}, we have 
$M_{(m-r)(n-r)}(S/I_r)= nm-rn$ by (F1) in the first section. Hence $M_{(m-r)(n-r)-1}(I_r)-(m-r)(n-r)+1=
nm-rn-(m-r)(n-r)+1=r(m-r)+1$ and $M_0(I_r)=r+1$.
Hence for $r \neq m-1$, the ideal $I_r$ does not have a linear resolution.
 \end{proof}

The Stanley-Reisner ring $S_{\sigma}=K[\Delta_{\sigma}]$ being Cohen-Macaulay, admits a graded canonical module
$\omega_{\sigma}$. In \cite{BAH}, the $a-$invariant of $S_{\sigma}$  which is the negative of the 
least degree of canonical module $\omega_{\sigma}$ is computed. Next, we want to determine the degree of all the
generators of $\omega_{\sigma}$ for $\sigma=[1,\ldots,r\ps|\ps 1,\ldots,r]$, $r\leq m-1$. First we need 
the following lemma:

\begin{Lemma}\label{inout} Let $\Delta_{\sigma}=\langle F_1,\ldots,F_t \rangle$ be the simplicial complex 
with Stanley-Reisner ideal $I_{\sigma}$ and $F_1,\ldots,F_t$ be the shelling order $\Sigma$. 
Let $\Delta_{i}=\langle F_1,\ldots, F_i \rangle$. Then the boundary complex of $\Delta_i$ is given by
$$\partial(\Delta_i)=\big\{G\in \Delta_i:\; F_k\setminus G \not\subset \mathcal{C}(F_k)\;
\mbox{for\;all\;}\; k\leq i\; \mbox{with}\; G\subset F_k\big\}.$$ 
\end{Lemma}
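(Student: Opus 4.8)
The plan is to characterize membership of a face $G \in \Delta_i$ in the boundary complex $\partial(\Delta_i)$ by tracking how the ball grows under the shelling, using the combinatorial dictionary provided by Lemma \ref{late}. Recall that $\partial(\Delta_i)$ consists of those faces of $\Delta_i$ contained in a codimension-one face $G'$ (a ``ridge'') which lies in exactly one facet $F_k$ with $k \le i$. So it suffices to identify exactly which ridges of $\Delta_i$ are free, i.e.\ belong to a unique facet, and Lemma \ref{late} tells us precisely this: a ridge $F_k \setminus \{v\}$ (with $v \in F_k$, $k \le i$) is contained in some earlier facet $F_\ell$, $\ell < k$, if and only if $v \in \mathcal{C}(F_k)$; and it is never contained in a later facet $F_\ell$ with $k < \ell \le i$ because any later facet sharing a ridge with $F_k$ would, by the shelling property, attach along a ridge of $F_\ell$ of the form $F_\ell \setminus \{w\}$ with $w \in \mathcal{C}(F_\ell)$, and one checks this ridge cannot coincide with $F_k \setminus \{v\}$ since $F_k$ is already present. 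Hence the free ridges of $\Delta_i$ are exactly $F_k \setminus \{v\}$ with $k \le i$ and $v \notin \mathcal{C}(F_k)$.

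First I would prove the inclusion $\supseteq$. Take $G \in \Delta_i$ such that for every $k \le i$ with $G \subset F_k$ we have $F_k \setminus G \not\subset \mathcal{C}(F_k)$. Fix one such $F_k$ and pick $v \in (F_k \setminus G) \setminus \mathcal{C}(F_k)$; then the ridge $R = F_k \setminus \{v\}$ contains $G$, lies in $\Delta_i$, and by the argument above is free, so $R \in \partial(\Delta_i)$ and therefore $G \in \partial(\Delta_i)$. For the reverse inclusion $\subseteq$, suppose $G \notin$ the right-hand set, so there is some $k \le i$ with $G \subset F_k$ and $F_k \setminus G \subset \mathcal{C}(F_k)$. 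I need to show $G \notin \partial(\Delta_i)$, i.e.\ that no free ridge of $\Delta_i$ contains $G$. Any ridge containing $G$ and contained in $\Delta_i$ lies in some facet $F_\ell$ with $\ell \le i$; write it as $F_\ell \setminus \{w\}$ with $w \notin G$ (so $w \in F_\ell \setminus G$), and freeness forces $w \notin \mathcal{C}(F_\ell)$. The task is to derive a contradiction from the hypothesis $F_k \setminus G \subset \mathcal{C}(F_k)$.

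The main obstacle is precisely this last step: passing from the ``bad'' facet $F_k$ (where every vertex outside $G$ is a corner) to an arbitrary facet $F_\ell$ containing a ridge over $G$, and showing the ridge-defining vertex $w$ of $F_\ell$ must in fact be a corner of $F_\ell$. I expect one does this by comparing the two facets along the chain structure of paths in $V$: both $F_k$ and $F_\ell$ are disjoint unions of $r$ monotone paths with the same endpoints $P_i, Q_i$, and $G$ is a common subface. One analyzes, path by path, the symmetric difference between $F_k$ and $F_\ell$; the condition $F_k \setminus G \subset \mathcal{C}(F_k)$ says that on each path, all the ``extra'' lattice points of $F_k$ over $G$ are corners, which constrains $G$ to be, on each path, an order filter of a special shape (roughly, obtained from $P_i \to Q_i$ by deleting corners). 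One then shows that any facet $F_\ell$ obtainable from such a $G$ by adding a single vertex must add that vertex as a corner, forcing $w \in \mathcal{C}(F_\ell)$ and contradicting freeness. Establishing this local path-combinatorics claim — essentially that ``$G$ plus one non-corner vertex'' cannot be a ridge when $G$ sits inside a facet as a union of corner-deletions — is where the real work lies; everything else is a formal consequence of Lemma \ref{late} and the definition of the boundary complex.
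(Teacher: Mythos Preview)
Your approach and the paper's are essentially the same: both use Lemma~\ref{late} to pin down which codimension-one faces are free, and then pass from that to the description of all boundary faces. The paper organizes this slightly differently, first proving that
\[
\mathcal{F}\big(\partial(\Delta_i)\big)=\big\{G\in\Delta_i:\ F_k\setminus G=\{v\},\ v\notin\mathcal{C}(F_k)\ \text{for all }k\le i\text{ with }G\subset F_k\big\},
\]
and then asserting (without argument) that the downward closure of this set equals the set in the statement.

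There is, however, a genuine error in your opening paragraph. You claim that a ridge $F_k\setminus\{v\}$ with $v\notin\mathcal{C}(F_k)$ is automatically free because it ``is never contained in a later facet $F_\ell$ with $k<\ell\le i$.'' This is false: Lemma~\ref{late} only tells you such a ridge lies in no \emph{earlier} facet; it can perfectly well lie in some later $F_\ell$, in which case $F_\ell\setminus(F_k\setminus\{v\})=\{w\}$ with $w\in\mathcal{C}(F_\ell)$, and there is no contradiction---$F_k$ being ``already present'' is exactly what Lemma~\ref{late} predicts. The correct statement (which is what the paper proves) is that a ridge $G$ is free iff \emph{every} facet containing it removes a non-corner. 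Your $\supseteq$ argument is then easily repaired: instead of fixing an arbitrary $F_k\supset G$, take $k$ \emph{maximal} among indices with $G\subset F_k$. The ridge $R=F_k\setminus\{v\}$ (with $v\in(F_k\setminus G)\setminus\mathcal{C}(F_k)$) now cannot sit inside any $F_\ell$ with $\ell>k$ (since $R\supset G$ would force $G\subset F_\ell$), and Lemma~\ref{late} rules out $\ell<k$; so $R$ really is free.

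For the reverse inclusion you correctly isolate the crux: given $F_k\setminus H\subset\mathcal{C}(F_k)$, one must show that \emph{every} ridge over $H$---including those living in facets $F_\ell\neq F_k$---is non-free. The paper does not prove this; it simply declares the two descriptions of $\partial(\Delta_i)$ to coincide. Your proposed path-by-path analysis is a reasonable plan (one can in fact show that the link of $H$ in $\Delta_i$ is the boundary of a cross-polytope, hence a sphere, by observing that each removed corner $z$ can be refilled either by $z$ or by $z-(1,1)$, and that all such refills yield facets preceding $F_k$ in the shelling), but as written neither your sketch nor the paper's proof actually closes this gap.
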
\pagebreak[3]

\begin{proof}  It is enough to show that the set of facets of $\partial(\Delta_i)$
is given by 
$$\mathcal{F}(\partial(\Delta_i))=\big\{G\in \Delta_i:
F_k\setminus G=\{v\}, v\notin\mathcal{C}(F_k)\;\mbox{for\;all} \;k\leq i\; \mbox{with}\;G\subset F_k\big\}.$$
Indeed, if we assume the above statement to be true, then 
the boundary complex is the set:
$$\big\{H\in \Delta_i:\; H\subset G \;\mbox{for}\;\mbox{some}\; G\in \mathcal{F}\big(\partial(\Delta_i)\big)\big\},$$
which is further equal to the set
$$\big\{H\in \Delta_i:\; H\subset G,\;F_k\setminus G=\{v\},\; 
v\notin\mathcal{C}(F_k)\;\mbox{for\;all} \;k\leq i\; \mbox{with}\;G\subset F_k \big\}.$$
The above set is equal to 
$$\big\{H\in \Delta_i:\; F_k\setminus H\not\subset\mathcal{C}(F_k)\; \mbox{for\;all}\; k\leq i\; \mbox{with}\;
H\subset F_k\big\},$$
as in the statement of the lemma.

Let $\mathcal{S}=\big\{G\in \Delta_i:
F_k\setminus G=\{v\}, v\notin\mathcal{C}(F_k)\;\mbox{for\;all} \;k\leq i\; \mbox{with}\;G\subset F_k\big\}$.
By Lemma~\ref{late}, we have
$\mathcal{S}\subset\mathcal{F}\big(\partial(\Delta_i)\big)$. 
Now let $G\notin \mathcal{S}$ be of  codimension one. It follows that $G$ 
is of the form $F_k\setminus\{v\}$ where $v\in \mathcal{C}(F_k)$ for some $k\leq i$.
Again by Lemma \ref{late}, there exists $\ell<k$ such that $G\subset F_{\ell}$.
Hence $G=F_{\ell}\cap F_{k}$, which implies $G\notin\partial(\Delta_i)$.
\end{proof}

In Theorem \ref{minor}, we have shown that the geometric realization $|\Delta_{\sigma}|$ 
of $\Delta_{\sigma}$ is a ball and therefore the geometric realization $|\partial_{\sigma}|$ of
$\partial_{\sigma}$ is a sphere. It is known that simplicial spheres are Gorenstein over any field, 
 see \cite[Corollary 5.6.5]{BH}. Hence we may apply Theorem \ref{hoch} to compute $\omega_{\sigma}$.
Before stating the next corollary, we define the notion of a {\it non-flippable} path. Let $D$
be a path from $a$ to $b$. Let $v\in D$ such that $\{v+(1,0),v+(0,1)\}\in D$
and neither $v+(1,0)$ nor $v+(0,1)$ is a corner point of $D$. Then $v$ 
can be flipped to get a path $D'=(D\setminus\{v\})\cup \{v+(1,1)\}$. We call such 
an interchange of the point $v$ to $v+(1,1)$ {\it a flip}. 
Notice that the new path $D'$ obtained after a flip from $D$ has the following property:
 $\mathcal{C}(D)\subset \mathcal{C}(D')$. We call a path $D$ to be a {\it flippable} path if 
 $D$ could be flipped to get a new path $D'$, otherwise we call $D$ to be a {\it non-flippable} path.
 Hence, a non-flippable 
path $D$ from $a$ to $b$ is a path which has the following property: 
for all $v\in D$ such that $\{v+(0,1),v+(1,0)\}\subset D$, one has 
 either $v+(0,1)$ or $v+(1,0)$ is a corner point of $D$. Equivalently, one may notice that a path 
$D$ from $a$ to $b$ is a  non-flippable path if for a path $D'$ from $a$ to $b$
with $\mathcal{C}(D') \supset \mathcal{C}(D)$, one has $D'=D$. 

 \begin{figure}[h]
\setlength{\unitlength}{6mm}
\begin{picture}(6,7)
\linethickness{0.15mm}
\multiput(-6,0)(1,0){6}%
{\line(0,1){6}}
\linethickness{0.15mm}
\multiput(-6,0)(0,1){7}%
{\line(1,0){5}}
\multiput(-4.8,2.1)(1,0){1}%
{{$v$}}
\multiput(-6,6.1)(1,0){1}%
{{$D$}}
\linethickness{0.4mm}
\multiput(-6,6)(0,1){1}%
{\line(0,-1){2}}
\linethickness{0.4mm}
\multiput(-6,4)(1,0){1}%
{\line(1,0){1}}
\linethickness{0.4mm}
\multiput(-5,4)(0,1){1}%
{\line(0,-1){2}}
\linethickness{0.4mm}
\multiput(-5,2)(1,0){1}%
{\line(1,0){2}}
\linethickness{0.4mm}
\multiput(-3,2)(1,0){1}%
{\line(0,-1){1}}
\linethickness{0.4mm}
\multiput(-3,1)(1,0){1}%
{\line(1,0){1}}
\linethickness{0.4mm}
\multiput(-2,1)(1,0){1}%
{\line(0,-1){1}}
\linethickness{0.4mm}
\multiput(-2,0)(1,0){1}%
{\line(1,0){1}}

\linethickness{0.15mm}
\multiput(6,0)(1,0){6}%
{\line(0,1){6}}
\linethickness{0.15mm}
\multiput(6,0)(0,1){7}%
{\line(1,0){5}}
\linethickness{0.4mm}
\multiput(6,6)(0,1){1}%
{\line(0,-1){2}}
\linethickness{0.4mm}
\multiput(6,4)(1,0){1}%
{\line(1,0){1}}
\linethickness{0.4mm}
\multiput(7,4)(0,1){1}%
{\line(0,-1){1}}
\linethickness{0.4mm}
\multiput(7,3)(0,1){1}%
{\line(1,0){1}}
\linethickness{0.4mm}
\multiput(8,3)(0,1){1}%
{\line(0,-1){1}}
\linethickness{0.4mm}
\multiput(8,2)(1,0){1}%
{\line(1,0){1}}
\linethickness{0.4mm}
\multiput(9,2)(1,0){1}%
{\line(0,-1){1}}
\linethickness{0.4mm}
\multiput(9,1)(1,0){1}%
{\line(1,0){1}}
\linethickness{0.4mm}
\multiput(10,1)(1,0){1}%
{\line(0,-1){1}}
\linethickness{0.4mm}
\multiput(10,0)(1,0){1}%
{\line(1,0){1}}
\multiput(8.2,3.1)(1,0){1}%
{{$v'$}}
\multiput(6,6.1)(1,0){1}%
{{$D'$}}

\end{picture}
\caption{\label{flnfl} A flippable path $D$ and a non-flippable
path $D'$ where $D'=(D\setminus\{v\})\cup\{v'\}$.}
\end{figure}
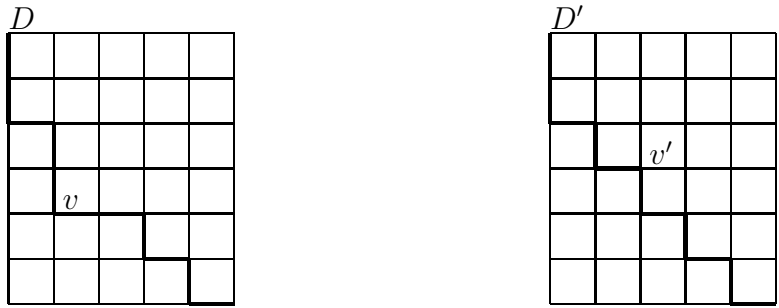

We call a facet $F=\bigcup_{i} C_i$ of the simplicial complex $\Delta_{\sigma}$ a 
{\it non-flippable facet}, if each $C_i$
is a non-flippable path, otherwise we call $F$ a {\it flippable facet}. Notice that
a facet $F$ of $\Delta_{\sigma}$ is non-flippable if for each facet $F'$ of $\Delta_{\sigma}$
with $\mathcal{C}(F') \supset \mathcal{C}(F)$, one has $F'=F$. 
We denote the set
of non-flippable facets of $\Delta_{\sigma}$ by $\mathcal{NF}(\Delta_{\sigma})$.  
Let $F,F'$ be two facets of $\Delta_{\sigma}$ with $\mathcal{C}(F)\subset \mathcal{C}(F')$.
Then $F'$ is obtained from $F$ by finite number of flips. One has:
\begin{Lemma} \label{fun} Let $F,F'$ be two facets of $\Delta_{\sigma}$, then the following two
conditions are equivalent:
\begin{enumerate} 
\item[(a)] $\mathcal{C}(F)\subset\mathcal{C}(F')$,
\item[(b)] $F'\setminus \mathcal{C}(F') \subset F \setminus \mathcal{C}(F)$.
\end{enumerate}
\end{Lemma}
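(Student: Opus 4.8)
The plan is to prove the two implications of the equivalence separately, in each case reducing matters to the local effect of a single flip.

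\smallskip
\noindent\emph{Direction $(a)\Rightarrow(b)$.} By the fact recalled immediately before the lemma, $\mathcal{C}(F)\subseteq\mathcal{C}(F')$ lets us write a chain of facets $F=F_{0},F_{1},\dots,F_{k}=F'$ in which $F_{s+1}$ is obtained from $F_{s}$ by one flip. Since set inclusion is transitive, it is enough to show that a single flip, sending $G$ to $G'=(G\setminus\{v\})\cup\{w\}$ with $w=v+(1,1)$ and with $v,v+(1,0),v+(0,1)$ on one path $C$ of $G$ and $v+(1,0),v+(0,1)\notin\mathcal{C}(G)$, already satisfies $G'\setminus\mathcal{C}(G')\subseteq G\setminus\mathcal{C}(G)$. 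First I would note that $v$ itself is never a corner of $G$: its two neighbours on $C$ are exactly $v+(0,1)$ and $v+(1,0)$, so $v-(1,0)$ and $v-(0,1)$ do not lie on $C$, hence --- the paths of a facet being pairwise disjoint --- not in $G$ at all. Second, $w\in\mathcal{C}(G')$ directly from the definition of a flip. Third, and this is the substantive point of this direction, $\mathcal{C}(G)\subseteq\mathcal{C}(G')$: a corner of $G$ lying on a path other than $C$ is unaffected, and a corner $c\neq v$ of $C$ survives in the flipped path unless one of the two points $c-(1,0),c-(0,1)$ equals $v$ --- which would force $c\in\{v+(1,0),v+(0,1)\}$, contradicting $v+(1,0),v+(0,1)\notin\mathcal{C}(G)$. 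Combining the three observations, $G'\setminus\mathcal{C}(G')=(G\setminus\{v\})\setminus\mathcal{C}(G')\subseteq(G\setminus\{v\})\setminus\mathcal{C}(G)\subseteq G\setminus\mathcal{C}(G)$; now iterate along the chain.

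\smallskip
\noindent\emph{Direction $(b)\Rightarrow(a)$.} Here the idea is to run the flips in reverse. Suppose $F'\setminus\mathcal{C}(F')\subseteq F\setminus\mathcal{C}(F)$ with $F\neq F'$. Since all facets of $\Delta_{\sigma}$ have the same number of vertices (Theorem~\ref{minor}), comparing cardinalities in (b) gives $c(F)\leq c(F')$; in particular, if $F'$ had no corner at all then (b) would read $F'\subseteq F$ and force $F'=F$. Hence $F'$ carries a corner $p=v+(1,1)$, and I would show that one can be chosen for which the ``un-flip'' $F''=(F'\setminus\{p\})\cup\{v\}$ is again a facet, still satisfies $F''\setminus\mathcal{C}(F'')\subseteq F\setminus\mathcal{C}(F)$, and has $c(F'')<c(F')$ (un-flipping a corner creates no new corners, by the same kind of neighbour analysis as above). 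Iterating, the corner count drops at each step, so after finitely many steps one reaches a facet that lies below $F$ in the sense of (b) and has no corner, hence equals $F$; thus $F'$ is obtained from $F$ by flips, and $\mathcal{C}(F)\subseteq\mathcal{C}(F')$ then follows from the single-flip computation of the first part. If the facet-level bookkeeping gets unwieldy, one can instead pass to the constituent paths --- $\mathcal{C}(F)$ is the disjoint union of the corner-sets of the paths $C_{1},\dots,C_{r}$ of $F$, and likewise for $F'$ --- and analyse the two conditions path by path.

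\smallskip
\noindent The step I expect to be the main obstacle is, in the converse implication, the \emph{choice} of the corner $p$ of $F'$ to un-flip and the check that $F''$ still lies below $F$: one must rule out that deleting $p$ from $F'$ breaks the inclusion $F'\setminus\mathcal{C}(F')\subseteq F\setminus\mathcal{C}(F)$, and it is exactly here that the fine structure of corners along a single path, together with the non-crossing condition on the paths of a facet, has to be used. The forward implication, in contrast, is a routine accounting of corners across one flip once the flip decomposition from the text is granted.
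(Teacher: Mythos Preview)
The paper states this lemma without proof and uses it immediately in Corollary~\ref{mincan}, so there is no argument in the paper to compare against; I can only comment on the correctness of your proposal.

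Your $(a)\Rightarrow(b)$ is fine: the reduction to a single flip via the sentence preceding the lemma, together with your three local observations, gives the inclusion cleanly.

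For $(b)\Rightarrow(a)$ there is a genuine error, and it is not the one you flagged. You justify $c(F'')<c(F')$ by the parenthetical ``un-flipping a corner creates no new corners, by the same kind of neighbour analysis as above''. This is false. When you replace a corner $p$ of a path $D$ by $v=p-(1,1)$, the two old neighbours $p-(1,0)$ and $p-(0,1)$ each acquire $v$ as a neighbour on the ``corner'' side, and either of them becomes a new corner whenever its \emph{other} neighbour in $D$ already lay in the appropriate direction. For instance, on the single path
\[
(1,5),(2,5),(2,4),(2,3),(3,3),(4,3),(4,2),(4,1)
\]
the corners are $(2,5)$ and $(4,3)$; un-flipping $(4,3)$ yields
\[
(1,5),(2,5),(2,4),(2,3),(3,3),(3,2),(4,2),(4,1),
\]
whose corners are $(2,5),(3,3),(4,2)$ --- the count has gone \emph{up}. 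The forward analysis does not transfer because the paper's flip carries the side condition that $v+(1,0)$ and $v+(0,1)$ are non-corners, and it is precisely this hypothesis that forces $\mathcal{C}(G)\subset\mathcal{C}(G')$; your un-flip imposes no analogous condition on $p-(1,0)$ and $p-(0,1)$, so it is not in general the inverse of a flip in the paper's sense. Without the monotone descent of $c$ your iteration does not terminate as described; and even with a corrected descent it would have to halt at $c(F)$ rather than at zero, leaving you with the separate (and unaddressed) implication $F''\setminus\mathcal{C}(F'')=F\setminus\mathcal{C}(F)\Rightarrow F''=F$.
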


For a given subset
$Z$ of $[m]\times[n]$ we denote by $X_Z$, the monomial $\prod_{(i,j)\in Z}X_{ij}$. 
We have :

\begin{Corollary} \label{mincan} Let $\omega_{\sigma}$ be the canonical ideal  of $K[\Delta_{\sigma}]$
and $\mathcal{M}$ denote the set $\{F \setminus~\mathcal{C}({F}):\; F\in \mathcal{NF}(\Delta_{\sigma})\}$. 
Then the minimal set of generators of 
$\omega_{\sigma}$ is given by
$G(\omega_{\sigma})=\{X_{G}:\; G \in \mathcal{M}\}$.
\end{Corollary}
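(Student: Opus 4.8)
The plan is to combine Theorem~\ref{hoch} with the explicit combinatorial description of $\partial(\Delta_\sigma)$ from Lemma~\ref{inout} and the characterization of non-flippable facets. By Theorem~\ref{hoch}, since $\Delta_\sigma$ is a ball whose boundary is Gorenstein, the canonical ideal $\omega_\sigma$ of $K[\Delta_\sigma]$ is generated (as a $\ZZ^n$-graded module, hence as an ideal) by the monomials $\overline{X_G}$ with $G \in \Delta_\sigma \setminus \partial(\Delta_\sigma)$, i.e.\ by the inside faces of $\Delta_\sigma$. So the content of the corollary is purely combinatorial: the \emph{minimal} generators of this monomial ideal correspond exactly to the sets $F \setminus \mathcal{C}(F)$ with $F$ a non-flippable facet.

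First I would identify the inside faces. By Lemma~\ref{inout} (applied with $i = t$, so $\Delta_i = \Delta_\sigma$), a face $G$ is a boundary face iff there exists a facet $F_k \supset G$ with $F_k \setminus G \not\subset \mathcal{C}(F_k)$; hence $G$ is an \emph{inside} face iff for every facet $F \supset G$ one has $F \setminus G \subset \mathcal{C}(F)$. In particular, for such an inside face $G$ and any facet $F \supset G$, the containment $G \supset F \setminus \mathcal{C}(F)$ holds. Thus every inside-face monomial $X_G$ is divisible by $X_{F \setminus \mathcal{C}(F)}$ for the (at least one) facet $F$ containing $G$, which shows that $\{X_{F \setminus \mathcal{C}(F)} : F \text{ a facet of } \Delta_\sigma\}$ already generates $\omega_\sigma$. (One should also check $F \setminus \mathcal{C}(F)$ is itself an inside face: if $F' \supset F \setminus \mathcal{C}(F)$ is a facet, then $F' \setminus (F \setminus \mathcal{C}(F)) \supset F' \setminus F \supset \mathcal{C}(F')$... — more carefully, use Lemma~\ref{fun}: $F' \setminus \mathcal{C}(F') \subset F \setminus \mathcal{C}(F)$ would need $\mathcal{C}(F) \subset \mathcal{C}(F')$, and in general one argues directly that $F \setminus (F \setminus \mathcal{C}(F)) = \mathcal{C}(F) \subset \mathcal{C}(F')$ whenever $F' \supset F \setminus \mathcal{C}(F)$, since the corners of $F$ not belonging to $F'$ would force $F' \setminus (F \setminus \mathcal{C}(F))$ to include them, contradiction — the precise bookkeeping here I defer.)

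Next I would reduce to non-flippable facets and prove minimality. Suppose $F$ is flippable, so there is a facet $F'$ with $\mathcal{C}(F) \subsetneq \mathcal{C}(F')$ obtained by one flip. By Lemma~\ref{fun}, $\mathcal{C}(F) \subset \mathcal{C}(F')$ is equivalent to $F' \setminus \mathcal{C}(F') \subset F \setminus \mathcal{C}(F)$, and the inclusion of corner sets is strict, so $F' \setminus \mathcal{C}(F') \subsetneq F \setminus \mathcal{C}(F)$; hence $X_{F' \setminus \mathcal{C}(F')}$ properly divides $X_{F \setminus \mathcal{C}(F)}$ and the latter is redundant. Iterating, only the non-flippable facets contribute, giving $G(\omega_\sigma) \subseteq \{X_G : G \in \mathcal{M}\}$. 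For the reverse, I must show that for $F \in \mathcal{NF}(\Delta_\sigma)$ the monomial $X_{F \setminus \mathcal{C}(F)}$ is not divisible by $X_{F'' \setminus \mathcal{C}(F'')}$ for any other non-flippable facet $F''$; equivalently $F'' \setminus \mathcal{C}(F'') \not\subset F \setminus \mathcal{C}(F)$ unless $F'' = F$. By Lemma~\ref{fun} the inclusion $F'' \setminus \mathcal{C}(F'') \subset F \setminus \mathcal{C}(F)$ is equivalent to $\mathcal{C}(F) \subset \mathcal{C}(F'')$; since $F$ is non-flippable, $\mathcal{C}(F) \subset \mathcal{C}(F'')$ forces (by the characterization: any facet with corner set containing $\mathcal{C}(F)$ equals $F$) $F'' = F$. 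This closes the argument.

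\textbf{Main obstacle.} The delicate point is the correct handling of \emph{which} monomials among $\{X_{F\setminus \mathcal C(F)}\}$ are non-redundant, i.e.\ establishing that distinct non-flippable facets give incomparable sets $F \setminus \mathcal{C}(F)$ and that every inside face is divisible by one of them — this is where Lemma~\ref{fun} (translating corner-set inclusion into complement inclusion) and the statement that $F'$ is reached from $F$ by finitely many flips (stated just before Lemma~\ref{fun}) do the real work. I would expect the bookkeeping verifying that $F \setminus \mathcal C(F)$ is genuinely an inside face, and that the flip relation is well-founded (terminates) so the reduction to $\mathcal{NF}$ is legitimate, to be the steps requiring the most care; the termination is clear since each flip strictly enlarges the finite set $\mathcal{C}(F)$.
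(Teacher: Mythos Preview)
Your overall strategy is the paper's strategy (Hochster's theorem reduces to identifying the minimal inside faces, then Lemma~\ref{inout} plus Lemma~\ref{fun} pin these down as the $F\setminus\mathcal C(F)$ for non-flippable $F$), and your use of Lemma~\ref{fun} in the reduction and minimality steps is exactly right. But you misread Lemma~\ref{inout}: it says $G$ is a \emph{boundary} face iff $F_k\setminus G\not\subset\mathcal C(F_k)$ for \emph{all} facets $F_k\supset G$, not for \emph{some}. Consequently $G$ is an \emph{inside} face iff there \emph{exists} a facet $F\supset G$ with $F\setminus G\subset\mathcal C(F)$, i.e.\ the inside faces are precisely the sets $F\setminus Z$ with $F$ a facet and $Z\subset\mathcal C(F)$. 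Your inverted quantifier gives a strictly stronger condition which is false in general (e.g.\ for $m=2$, $n=3$, $r=1$, the inside face $\{(2,1),(1,2),(1,3)\}$ sits in the facet $\{(2,1),(1,1),(1,2),(1,3)\}$, which has empty corner set).

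This misreading is exactly why you get stuck verifying that $F\setminus\mathcal C(F)$ is an inside face: with the correct characterization this is immediate, since $F$ itself witnesses it via $F\setminus(F\setminus\mathcal C(F))=\mathcal C(F)\subset\mathcal C(F)$. Once you fix the quantifier, the deferred ``bookkeeping'' disappears, and what remains --- every minimal inside face has the form $F\setminus\mathcal C(F)$; $F$ flippable implies $F\setminus\mathcal C(F)$ is not minimal; $F$ non-flippable implies $F\setminus\mathcal C(F)$ is minimal, all via Lemma~\ref{fun} --- is exactly the paper's proof.
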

\begin{proof} 
By Theorem \ref {minor} and Theorem \ref{hoch}, it is enough to show that $\mathcal{M}$ is the set of  
the minimal inside faces (under inclusion) of $\Delta_{\sigma}$.

By Lemma \ref{inout}, we know that the set of inside faces of the simplicial complex 
$\Delta_{\sigma}$ is given by
$\mathcal{S}=~\{F \setminus~Z:\;  F \in \mathcal{F}(\Delta_{\sigma}),\; Z \subset \mathcal{C}(F)\}$. 
Therefore each minimal inside face $G$
is of the form $F\setminus\mathcal{C}(F)$, $F \in \mathcal{F}(\Delta_{\sigma})$.

Let $F\in \mathcal{NF}(\Delta_{\sigma})$. Suppose $G=F\setminus \mathcal{C}(F)$
is a not a minimal inside face. Then there exists $G'\subset G$ such that $G'=F'\setminus \mathcal{C}(F')$
is a minimal  inside face. By Lemma \ref{fun}, it follows $\mathcal{C}(F') \supset \mathcal{C}(F)$, a 
contradiction.

Now, let $G=F\setminus \mathcal{C}(F)$ be a minimal inside face. Suppose 
$F\notin \mathcal{NF}(\Delta_{\sigma})$, then there exists  a facet $F'$ such that 
$\mathcal{C}(F')\supset \mathcal{C}(F)$. Again, by Lemma \ref{fun}, it follows then
$F'\setminus \mathcal{C}(F')\subset F\setminus \mathcal{C}(F)$, a contradiction.
\end{proof}

 In general, to give the explicit expressions of multi-degrees of the generators of canonical ideal
 $\omega_{\sigma}$ may  not be possible. But we would like to give all possible total degrees of 
 the generators of the canonical ideal  $\omega_{\sigma}$ for $\sigma=[1,\ldots,r\ps|\ps 1,\ldots,r]$, $r \leq m-1$. 
 In this case, $I_{\sigma}$ is the ideal generated by all $r+1\times r+1$ minors of $X$. For
 $\sigma=[1,\ldots,r\ps|\ps 1,\ldots,r]$, we denote $I_{\sigma}$ by $I_{r}$,
 $\omega_{\sigma}$ by $\omega_{r}$ and $\Delta_{\sigma}$ be $\Delta_{r}$.

 From Corollary \ref{mincan}, it follows that $|F|-c(F)$, $F \in \mathcal{NF}(\Delta_{\sigma})$ 
 are the total 
 degrees of the generators of the canonical ideal $\omega_{\sigma}$. We call the corners
 of the a non-flippable facet $F \in \mathcal{NF}(\Delta_{\sigma})$ the {\it non-flippable corners}. 
 In the case of the simplicial complex
$\Delta_{r}$, we will show
that the number $t$ of the non-flippable corners could be any integer between $r$ and $r(m-r)$.

\begin{Proposition}\label{cor} Let $\Delta_{r}$ be the simplicial complex with the
Stanley-Reisner ideal $I_{r}^{*}$. Then there exists a non-flippable facet $F$ of the
simplicial complex $\Delta_{r}$
with $t$ corners if and only if  $r \leq t \leq r(m-r).$ 
\end{Proposition}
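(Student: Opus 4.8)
The plan is to split the claim into three pieces: the inequality $c(F)\ge r$ for every non-flippable facet $F$, the inequality $c(F)\le r(m-r)$, and the realization of all intermediate values; the last piece is where the real work lies.

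\emph{Lower bound.} Write $F=\bigcup_{i=1}^{r}C_i$ as a union of pairwise disjoint paths, $C_i$ running from $(i,n)$ to $(m,i)$. By the definition of a non-flippable facet each $C_i$ is a non-flippable path and $c(F)=\sum_{i=1}^{r}c(C_i)$, so it suffices to show $c(C_i)\ge 1$ for every $i$. Since $1\le i\le r\le m-1<m\le n$, each $C_i$ contains at least one down step and at least one left step. If $c(C_i)=0$, then no down-run of $C_i$ is followed by a left-run, so $C_i$ consists of all its left steps followed by all its down steps; at its unique turning point $v$ one has $v+(1,0),v+(0,1)\in C_i$, and directly from the definition neither is a corner of $C_i$, so $v$ can be flipped — contradicting non-flippability. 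Hence $c(F)\ge r$.

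\emph{Upper bound.} Here I would argue through the canonical ideal. By \cite[Corollary 5.18]{BV} every facet has $|F|=\dim\Delta_r+1=r(m+n-r)$ vertices, so by Corollary~\ref{mincan} the minimal generators of $\omega_r:=\omega_{\Delta_r}$ are precisely the monomials $X_{F\setminus\mathcal C(F)}$ with $F\in\mathcal{NF}(\Delta_r)$, of degree $|F|-c(F)=r(m+n-r)-c(F)$; thus the least degree of a generator of $\omega_r$ is $r(m+n-r)-\max_{F}c(F)$. On the other hand the least degree of a generator of a graded canonical module of a Cohen--Macaulay graded ring $R$ equals $-a(R)$. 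Now $K[\Delta_r]=S/I_r^{*}$ is Cohen--Macaulay (because $\Delta_r$ is shellable, Theorem~\ref{minor}) of the same Krull dimension and with the same Hilbert function as the Cohen--Macaulay ring $S/I_r$ — the defining minors of $I_r$ forming a Gr\"obner basis — hence these rings have the same $h$-vector and the same $a$-invariant, which by \cite[Corollary 1.5]{BAH} equals $-nr$. Therefore $r(m+n-r)-\max_F c(F)=nr$, i.e. $\max_F c(F)=r(m-r)$; this proves the upper bound and shows in passing that the value $r(m-r)$ is attained.

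\emph{Realization.} It remains to realize every $t$ with $r\le t\le r(m-r)$. I would show that for every tuple $(s_1,\dots,s_r)$ with $1\le s_i\le m-r$ there is a pairwise disjoint family of non-flippable paths $C_i$ from $(i,n)$ to $(m,i)$ with $c(C_i)=s_i$: its union is then a non-flippable facet with $\sum_i s_i$ corners, and as the $s_i$ range over $[1,m-r]$ the sum $\sum_i s_i$ takes every value in $[r,r(m-r)]$. The basic gadget is a ``fine staircase'': a path built from unit down-runs and unit left-runs, optionally preceded by one long left-run and followed by one long down-run; it has exactly one corner per unit down-run that is followed by a left-run, and it is non-flippable because at every left-to-down turn the point directly below the turn is a corner, which blocks the flip. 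I would fix $r$ pairwise disjoint nested bands in $[m]\times[n]$, the $i$-th band confining a path to run from $(i,n)$ to $(m,i)$ and allowing it to turn anywhere between $1$ and $m-r$ times, and place in the $i$-th band a fine staircase with $s_i$ corners. The delicate part is precisely this step: the endpoints $(1,n),\dots,(r,n)$ sit consecutively along the right edge and $(m,1),\dots,(m,r)$ along the bottom edge, so a path carrying too many corners is forced through the endpoints of a neighbouring path (this is also the combinatorial reason behind the per-band bound $c(C_i)\le m-r$). One therefore has to choose the nested bands explicitly so as to rule out all such collisions, check that inside each band every corner count from $1$ to $m-r$ is achieved by a genuinely non-flippable staircase, and verify — using Lemma~\ref{late} and Lemma~\ref{fun} — that a single flip of one $C_i$ in the assembled family cannot leak into an adjacent band. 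Given Corollary~\ref{mincan} and the value of the $a$-invariant of $S/I_r$, the two inequalities are routine; the construction is the main obstacle.
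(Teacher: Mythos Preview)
Your two inequalities are sound. For the lower bound you argue, as the paper does, that a non-flippable path must carry at least one corner. For the upper bound you take a genuinely different route: rather than bounding $c(C_i)$ path by path, you read off $\max_{F\in\mathcal{NF}(\Delta_r)}c(F)=r(m-r)$ from Corollary~\ref{mincan} together with the $a$-invariant $a(S/I_r)=-nr$ of \cite{BAH}, using that $S/I_r$ and $S/I_r^{*}$ are Cohen--Macaulay of the same dimension with the same Hilbert series and hence the same $a$-invariant. This is correct and has the pleasant side effect of delivering attainment at $t=r(m-r)$ for free. The paper instead simply asserts the per-path bound $c(C_i)\le m-r$ from the geometry of disjoint lattice paths.

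The realization step, however, is not yet a proof --- and you say as much. Two remarks. First, aiming to realize every tuple $(s_1,\dots,s_r)\in[1,m-r]^r$ is stronger than what is needed and is exactly what forces you into the unspecified ``nested bands''. The paper sidesteps this entirely: it writes $t=r+p(m-r-1)+q$ with $0\le p\le r$ and $0\le q<m-r-1$, and builds one explicit facet for each such $t$. The innermost $p$ paths $C_r,C_{r-1},\dots,C_{r-p+1}$ are full staircases with $m-r$ corners each (their corner sets are listed, each a diagonal shift of the previous), $C_{r-p}$ is a staircase with $q+1$ corners placed near the lower-left, and each remaining outer path $C_i$ has the single corner $(i+1,i+1)$; disjointness and non-flippability are then read off directly from the explicit corner coordinates. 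Second, your plan to ``verify --- using Lemma~\ref{late} and Lemma~\ref{fun} --- that a single flip of one $C_i$ in the assembled family cannot leak into an adjacent band'' is unnecessary: by the paper's definition a facet $F=\bigcup_i C_i$ is non-flippable precisely when each $C_i$ is non-flippable \emph{as a path}, so once each of your staircases is individually non-flippable and the family is pairwise disjoint, there is nothing further to check.
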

\begin{proof} We will construct a non-flippable facet for any given number of corners between
$r$ and $r(m-r)$. As any facet $F$  of $\Delta_{\sigma}$ is a 
disjoint union of $r$ paths $C_i$ from $(i,n)$ to $(m,i)$, we notice that the minimum number 
of non-flippable corner for any path $C_i$ is one and the maximum is
$(m-r)$. Hence minimum and maximum number of possible total non-flippable corners are $r$ and 
$r(m-r)$ respectively.
As a path $C_i$ is determined by its corners, we define the non-flippable corners for each path.
For $r$ corners, we define $C_i$ such that $\mathcal{C}(C_i)=(i+1,i+1)$ such that 
$F=C_1\cup \cdots\cup C_r$ is a non-flippable facet with $r$ corners; see Figure \ref{pic1}.

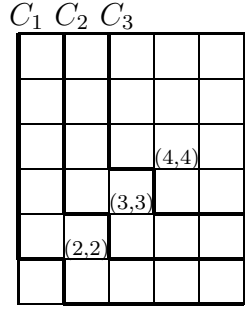
\begin{figure}[h]
\setlength{\unitlength}{6mm}
\begin{picture}(6,7)
\linethickness{0.15mm}
\multiput(0,0)(1,0){6}%
{\line(0,1){6}}
\linethickness{0.15mm}
\multiput(0,0)(0,1){7}%
{\line(1,0){5}}
\multiput(1,1.1)(1,0){1}%
{\tiny{(2,2)}}
\multiput(2,2.1)(1,0){1}%
{{\tiny(3,3)}}
\multiput(3,3.1)(1,0){1}%
{\tiny{(4,4)}}
\multiput(-0.2,6.2)(1,0){1}%
{$C_1$}
\multiput(0.8,6.2)(1,0){1}%
{$C_2$}
\multiput(1.8,6.2)(1,0){1}%
{$C_3$}
\linethickness{0.4mm}
\multiput(0,6)(0,1){1}%
{\line(0,-1){5}}
\linethickness{0.4mm}
\multiput(0,1)(1,0){1}%
{\line(1,0){1}}
\linethickness{0.4mm}
\multiput(1,1)(0,1){1}%
{\line(0,-1){1}}
\linethickness{0.4mm}
\multiput(1,0)(1,0){1}%
{\line(1,0){4}}
\linethickness{0.4mm}
\multiput(1,6)(0,1){1}%
{\line(0,-1){4}}
\linethickness{0.4mm}
\multiput(1,2)(1,0){1}%
{\line(1,0){1}}
\linethickness{0.4mm}
\multiput(2,2)(0,1){1}%
{\line(0,-1){1}}
\linethickness{0.4mm}
\multiput(2,1)(1,0){1}%
{\line(1,0){3}}
\linethickness{0.4mm}
\multiput(2,6)(1,0){1}%
{\line(0,-1){3}}
\linethickness{0.4mm}
\multiput(2,3)(1,0){1}%
{\line(1,0){1}}
\linethickness{0.4mm}
\multiput(3,3)(1,0){1}%
{\line(0,-1){1}}
\linethickness{0.4mm}
\multiput(3,2)(1,0){1}%
{\line(1,0){2}}
\end{picture}
\caption{\label{pic1}  A non-flippable facet with $r=3$ corners.}
\end{figure}

One can write any $r\leq t \leq r(m-r)$ as $t=r+p (m-r-1)+q$ for $0\leq p\leq r$ and $0\leq q < (m-r-1)$.
For any such $t$, we define the corners of the path $C_i$ as follows: 
For $0\leq k\leq p-1$, the path $C_{r-k}$ has corners at
$$\big(r-(k-1),n-(k+1)\big),\big(r-(k-2),n-(k+2)\big),\ldots,\big( r-(k-m+r),n-( k+m-r)\big).$$

The path $C_{r-p}$ has corners at $$(r-p,r-p+q),(r-p+1,r-p+q-1),\ldots,(r-p+q,r-p),$$ and 
for $1\leq i\leq r-p-1$, the path $C_i$ has corner at $(i+1,i+1)$.
Now $F=\bigcup_{i=1}^{r} C_i$ is a non-flippable facet with exactly $t=r+p(m-r-1)+q$
corners; see Figure \ref{pic2}. 
\begin{figure}[h]
\setlength{\unitlength}{6mm}
\begin{picture}(6,7)
\linethickness{0.15mm}
\multiput(0,0)(1,0){6}%
{\line(0,1){6}}
\linethickness{0.15mm}
\multiput(0,0)(0,1){7}%
{\line(1,0){5}}
\multiput(1,1.1)(1,0){1}%
{\tiny{(2,2)}}
\multiput(2,3.1)(1,0){1}%
{{\tiny(3,4)}}
\multiput(3,2.1)(1,0){1}%
{{\tiny(4,3)}}
\multiput(3,5.1)(1,0){1}%
{{\tiny(4,6)}}
\multiput(4,4.1)(1,0){1}%
{\tiny{(5,5)}}
\multiput(5,3.1)(1,0){1}%
{{\tiny(6,4)}}
\multiput(-0.2,6.2)(1,0){1}%
{$C_1$}
\multiput(0.8,6.2)(1,0){1}%
{$C_2$}
\multiput(1.8,6.2)(1,0){1}%
{$C_3$}
\linethickness{0.4mm}
\multiput(0,6)(0,1){1}%
{\line(0,-1){5}}
\linethickness{0.4mm}
\multiput(0,1)(1,0){1}%
{\line(1,0){1}}
\linethickness{0.4mm}
\multiput(1,1)(0,1){1}%
{\line(0,-1){1}}
\linethickness{0.4mm}
\multiput(1,0)(1,0){1}%
{\line(1,0){4}}
\linethickness{0.4mm}
\multiput(1,6)(0,1){1}%
{\line(0,-1){3}}
\linethickness{0.4mm}
\multiput(1,3)(1,0){1}%
{\line(1,0){1}}
\linethickness{0.4mm}
\multiput(2,3)(1,0){1}%
{\line(0,-1){1}}
\linethickness{0.4mm}
\multiput(2,2)(1,0){1}%
{\line(1,0){1}}
\linethickness{0.4mm}
\multiput(3,2)(1,0){1}%
{\line(0,-1){1}}
\linethickness{0.4mm}
\multiput(3,1)(1,0){1}%
{\line(1,0){2}}
\linethickness{0.4mm}
\multiput(2,6)(1,0){1}%
{\line(0,-1){1}}
\linethickness{0.4mm}
\multiput(2,5)(1,0){1}%
{\line(1,0){1}}
\linethickness{0.4mm}
\multiput(3,5)(1,0){1}%
{\line(0,-1){1}}
\linethickness{0.4mm}
\multiput(3,4)(1,0){1}%
{\line(1,0){1}}
\linethickness{0.4mm}
\multiput(4,4)(1,0){1}%
{\line(0,-1){1}}
\linethickness{0.4mm}
\multiput(4,3)(1,0){1}%
{\line(1,0){1}}
\linethickness{0.4mm}
\multiput(5,3)(1,0){1}%
{\line(0,-1){1}}
\end{picture}
\caption{\label{pic2}  A non-flippable facet with $t=r+p(m-r-1)+q$ corners with $m=6,n=7,r=3$ and $p=1,q=1$.}
\end{figure}
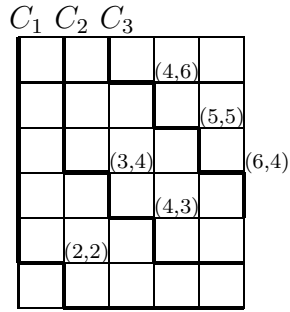

\end{proof}  \pagebreak[3]

\begin{Corollary}\label{total}  The 
canonical ideal $\omega_r$ 
has a minimal generator of degree $t$ if and only if  $rn\leq t\leq r(n+m-r-1)$. 
\end{Corollary}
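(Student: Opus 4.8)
The plan is to read the total degrees of the minimal generators of $\omega_r$ straight off Corollary \ref{mincan}, and then feed in the two numerical inputs that are already available: the (constant) cardinality of a facet of $\Delta_r$, and the exact range of the corner count $c(F)$ over non-flippable facets from Proposition \ref{cor}.

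First I would invoke Corollary \ref{mincan}: the minimal generators of $\omega_r$ are the monomials $X_G$ with $G = F\setminus\mathcal{C}(F)$, $F\in\mathcal{NF}(\Delta_r)$, so the total degrees occurring among them are exactly the integers $|F|-c(F)$ with $F$ a non-flippable facet. Next I would compute $|F|$: specializing $\sigma=[1,\dots,r\,|\,1,\dots,r]$ in Theorem \ref{minor} gives $a_i=b_i=i$, so $\dim\Delta_r = r(m+n+1)-\sum_{i=1}^r(a_i+b_i)-1 = r(m+n+1)-r(r+1)-1 = r(m+n-r)-1$; since $\Delta_r$ is a ball (hence pure), every facet satisfies $|F| = r(m+n-r)$. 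Finally, Proposition \ref{cor} says that $c(F)$ ranges exactly over the integers $t$ with $r\le t\le r(m-r)$, every such value being attained. Hence $|F|-c(F) = r(m+n-r)-c(F)$ runs precisely over the integers from $r(m+n-r)-r(m-r) = rn$ up to $r(m+n-r)-r = r(n+m-r-1)$, which is the claimed range.

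There is essentially no serious obstacle left here: Proposition \ref{cor} has already done the combinatorial heavy lifting (the explicit construction of non-flippable facets with any prescribed number of corners between $r$ and $r(m-r)$), and Corollary \ref{mincan} supplies the translation into degrees. The only points to watch are the arithmetic --- the dimension count $\dim\Delta_r = r(m+n-r)-1$ and the two subtractions $r(m+n-r)-r(m-r)=rn$ and $r(m+n-r)-r=r(n+m-r-1)$ --- and the consistency check that the minimal degree $rn$ agrees with the known value $-a(S/I_r)=nr$ of \cite{BAH}, which it does.
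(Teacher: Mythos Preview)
Your proof is correct and follows exactly the paper's approach: combine Corollary \ref{mincan} (degrees of minimal generators are $|F|-c(F)$ for non-flippable $F$) with the facet size $|F|=r(m+n-r)$ and the range $r\le c(F)\le r(m-r)$ from Proposition \ref{cor}. The paper's own proof is a one-line citation of these same ingredients.
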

\begin{proof} We have $\dim R/I_r=|F|=r(m+n)-r^2$, \cite[Corollary 5.18]{BV}. Now by Corollary \ref{mincan} and 
from Proposition \ref{cor}, follows the result.
\end{proof}

Next, we want to consider the boundary complex $\partial_r$ of  the simplicial complex $\Delta_r$. We want to 
show that the Stanley-Reisner ring $S/I_{\partial_r}$ satisfies the multiplicity conjecture. The geometric 
realization $|\partial_r|$ of the boundary complex $\partial_r$ is a sphere of dimension $r(m+n)-r^2-1$. 
Therefore the Stanley-Reisner ring  
$S/I_{\partial_{r}}$ is a Gorenstein ring, see \cite[Corollary 5.6.5]{BH}. Hence, the boundary complex
 $\partial_r$ satisfies properties (P1),\;(P2),\;(P3) of Section 1 and by Theorem \ref{hoch}, 
 we have $S/I_{\partial_r}=K[\Delta_r]/(\omega_r)$.
 
\begin{Theorem} The Stanley-Reisner ring $S/I_{\partial_r}$ satisfies the multiplicity conjecture.
\end{Theorem}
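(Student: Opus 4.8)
The plan is to deduce the statement from Theorem \ref{mul}, applied to the ball $\Delta=\Delta_r$. By Theorem \ref{minor}, $\Delta_r$ is a shellable ball, its ambient polynomial ring $S=K[X]$ has $mn$ variables, and $\dim K[\Delta_r]=d:=r(m+n-r)$ by \cite[Corollary 5.18]{BV}. First I would check the standing hypothesis that every vertex $(i,j)$ of $\Delta_r$ lies on $\partial_r$: by Lemma \ref{inout}, $\{(i,j)\}\notin\partial_r$ only if $(i,j)$ were the unique non-corner point of some facet, which is impossible since in every facet the $2r$ end points $(i,n)$ and $(m,i)$ of the defining paths are never corners. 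It then remains to verify hypotheses (A1) and (A2) of Section 1 for $\Delta_r$.

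For (A1): since $I_r$ is generated by the $(r+1)\times(r+1)$ minors of $X$, whose initial monomials are squarefree of degree $r+1$, the ideal $I_r^{*}=I_{\Delta_r}$ is generated in degree $r+1$; hence the "$m$" of Section 1 equals $r+1$, and the standing restriction $2\le m\le[(d+1)/2]$ reads $2\le r+1$ together with $2(r+1)\le d+1$, the latter holding because $m+n-r\ge r+2$ forces $d=r(m+n-r)\ge r(r+2)\ge 2r+1$. By (P3) and Theorem \ref{hoch}, the minimal inside faces of $\Delta_r$ correspond bijectively to the minimal generators of $\omega_r$, a face of dimension $\delta$ matching a generator of degree $\delta+1$; by Corollary \ref{total} these degrees are exactly the integers in $[\,rn,\ r(n+m-r-1)\,]$. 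The top of this interval is $r(n+m-r-1)=d-r$, producing a minimal inside face of dimension $d-(r+1)=d-m$; the bottom is $rn$, so the smallest dimension of a minimal inside face is $rn-1\ge r=m-1$ (because $n\ge m\ge 2$ gives $rn\ge 2r\ge r+1$). Thus (A1) holds.

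The main point is (A2): unimodality of the $h$-vector $(h'_0,\dots,h'_{d-1})$ of $\partial_r$. Since $I_r^{*}$ is an initial ideal of $I_r$, the ring $K[\Delta_r]=S/I_r^{*}$ shares the Hilbert function, hence the $h$-vector $(h_0,\dots,h_d)$, of the Cohen--Macaulay domain $S/I_r$; thus $h_i\ge 0$ for all $i$, and $h_i=0$ for $i>\ell$, where $\ell:=\deg\bigl(\sum_i h_i\lambda^i\bigr)=a(S/I_r)+d=-nr+r(m+n-r)=r(m-r)$, using $a(S/I_r)=-nr$ (\cite[Corollary 1.5]{BAH}). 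The key observation is $d-2\ell=r(n-m+r)>0$ since $n\ge m$ and $r\ge1$, i.e. $\ell<d/2$. Now, by the formula $h'_i=\sum_{j=0}^{i}(h_j-h_{d-j})$ (\cite[p.~137]{ST}, as used in the proof of Corollary \ref{linmul}): for $0\le i\le\ell$ every term $h_{d-j}$ with $j\le i$ vanishes (as $d-j\ge d-\ell>\ell$), so $h'_i=\sum_{j=0}^{i}h_j$ is non-decreasing in $i$, reaching $h'_\ell=e(S/I_r)$; for $\ell\le i\le d-\ell-1$ the same vanishing gives $h'_i=\sum_{j=0}^{\ell}h_j=e(S/I_r)$, a constant; and for $d-\ell-1\le i\le d-1$ the Gorenstein symmetry $h'_i=h'_{d-1-i}$ of (P1) reverses the first range, so $h'$ is non-increasing there. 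Hence $h(\partial_r)$ is increasing, then flat, then decreasing, i.e. unimodal, so (A2) holds, and Theorem \ref{mul} yields the multiplicity conjecture for $S/I_{\partial_r}$.

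I do not expect a serious obstacle: the heavy lifting is already in Theorem \ref{mul}, Corollary \ref{total} and the determinantal $a$-invariant. The one genuine idea is that the $h$-vector of $S/I_r$ is concentrated in degrees $\le r(m-r)<\dim(S/I_r)/2$, so that passing from the ball to its boundary sphere manufactures unimodality for free. The care needed is mostly with the notational collision (the abstract "$n$" is $mn$, the abstract "$d$" is $r(m+n-r)$, the abstract "$m$" is $r+1$) and with the boundary cases $r=m-1$, $m=n$ (where $\Delta_r$ is the linear ball already covered by Corollary \ref{linmul}), checking that the degenerate inequalities $2\le r+1\le[(d+1)/2]$ and $rn\ge r+1$ still survive.
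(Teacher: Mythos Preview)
Your proof is correct and follows the same route as the paper: verify (A1) via Corollary \ref{total} and (A2) by showing $h_i(\Delta_r)=0$ for $i>r(m-r)$, so that the partial-sum formula from \cite[p.~137]{ST} forces the $h$-vector of $\partial_r$ to be increasing, then constant, then decreasing. The only difference is in how that vanishing is obtained: the paper reads it off from the corner-counting interpretation of $h_i$ (\cite[Theorem 2.4]{BAH} together with Proposition \ref{cor}), whereas you deduce it from the $a$-invariant $a(S/I_r)=-nr$ (\cite[Corollary 1.5]{BAH}) and the equality of Hilbert series of $S/I_r$ and $S/I_r^{*}$; you also make explicit the standing hypotheses (every vertex lies on $\partial_r$, and $2\le r+1\le[(d+1)/2]$), which the paper leaves unchecked.
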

\begin{proof} We need to show that assumptions (A1) and (A2) are satisfied, see Theorem~\ref{mul}. 
As the  generators of the 
canonical ideal $\omega_r$ of $\Delta_r$ has degrees $t$ where $rn \leq t\leq r(m+n-r-1)$, 
there exists
a minimal inside face of dimension $r(m+n-r-1)-1=\dim R/I_{\partial_r}-(r+1)$  
and  there is no inside face of dimension less than $r+1$, see Theorem \ref{hoch}. Hence assumption
(A1) is satisfied. 

For Assumption (A2), we need to show that $h$-vector of $S/I_{\partial_r}$ is unimodal. 
Let the $h$-vector of the simplicial complex $\Delta_r$
 be given by $\big(h_0,\ldots,h_{r(m+n)-r^2}\big)$, then the $h$-vector 
 $\big(h'_0,\ldots,h'_{r(m+n)-r^2-1}\big)$ of the boundary complex $\partial_r$
 is given by (see \cite[Page 137]{ST}): $$h_0-h_{r(m+n)-r^2},\ldots,h_0+\cdots+
 h_{r(m+n)-r^2-1}-h_{r(m+n)-r^2}-\cdots-h_{1}.$$ 
By \cite[Theorem 2.4]{BAH} we have that $h_i$ calculates the number of facets $F$ 
of $\Delta_r$ with number of corners $c(F)=i$ and from Corollary \ref{cor}, we get that the 
maximal number of corners possible are $r(m-r)$, hence $h_{t}=0$ for all $r(m-r)+1\leq t\leq r(m+n)-r^2$.
Then it follows that the $h$-vector of $S/I_{\partial_r}$ is given by
$$
h'_i = 
\begin{cases}
h'_{r(m+n)-r^2-1-i}=\sum_{j=0}^{i}h_j & \text{for $i=0,\ldots,r(m-r)$;}\\\\
\sum_{j=0}^{r(m-r)}h_{j} & \text{for $j=r(m-r)+1,\ldots,nr-2$;}\\\\
\end{cases}
$$
Hence $h$-vector of $S/I_{\partial_r}$ is unimodal.
\end{proof} 
In the remaining part of this section, we compare the Stanley-Reisner ideal $I_{m-1}^*$
of $\Delta_{m-1}$ with its 
 $(I_{m-1}^{*})^{\vee}$. We will see in Theorem \ref{dual}
that the dual ideal $(I_{m-1}^{*})^{\vee}$ is again the initial ideal of the ideal of the maximal 
minors of a certain matrix.

Let $\Delta$ be a simplicial complex on the vertex set $[n]$
and $I_{\Delta}\subset K[X_1,\ldots,X_n]$ be the corresponding Stanley-Reisner ideal. 
There is another simplicial complex
$\Delta^{\vee}$ associated to $\Delta$ which is called the {\it Alexander dual} of $\Delta$.
The Alexander dual is defined by the simplicial complex
$\Delta^{\vee}=\{[n]\setminus F: F\notin \Delta\}$. It is easy to see that 
the complement of the minimal non-faces of the simplicial complex
 $\Delta$ define the facets of the dual complex $\Delta^{\vee}$ and vise-versa. Hence, the Stanley Reisner ideal
$I_{\Delta^{\vee}}$ is equal to the ideal 
$\big(X_{i_1}\cdots X_{i_k} : [n]\setminus\{i_1,\ldots,i_k\}\in \mathcal{F}(\Delta)\big)$. 
One may write $I_{\Delta}=\bigcap_{F \in \mathcal{F}(\Delta)}P_F$ where
$P_F=(X_i: i\notin F)$. Therefore the monomials $X_{P_F}=\prod_{X_i \in P_F} X_i$, $F\in \mathcal{F}(\Delta)$ 
form a set of minimal generators of $I_{\Delta^{\vee}}$. From here it follows that 
a monomial
$g$ is a minimal generator of $I_{\Delta^{\vee}}$ if and only if $\mathcal{S}=\{X_i:\;X_i|\;g\}$ 
is a vertex cover of the
 set of minimal generators $G(I_{\Delta})$ of $I_{\Delta}$ (We call a set of indeterminates
 $\mathcal{S}\subset\{X_1,\ldots,X_n\}$ to be vertex cover of a set of monomials $\{m_1,\ldots,m_k\}$
 if for all $m_i$ there exists some $X_j\in S$ such that $X_j|\;m_i$ ). 

 Let $X=(X_{ij})$ be a matrix of indeterminates of order $m\times n$. We call
 a matrix $Y=(Y_{ij})$ of indeterminates of order $(n-m+1)\times n$ a dual of the matrix $X$ if 
 $Y_{i,j+i-1}=X_{j,j+i-1}$ for $i=1,\ldots,n-m+1$ and $j=1,\ldots,m$. Notice that if $Y$
 is a dual of $X$, then $X$ is a dual of $Y$. For example, if \[X= \left( \begin{array}{cccc}
X_{11} & X_{12} & X_{13}& X_{14} \\
X_{21} & X_{22} & X_{23} & X_{24}\\
X_{31} & X_{32} & X_{33} & X_{34} \end{array} \right)\] is a matrix of order $3\times 4$
then a dual matrix $Y$ of order $2 \times 4$ can be defined as follows: 
 \[Y= \left( \begin{array}{cccc}
X_{11} & X_{22} & X_{33}& Y_{14} \\
Y_{21} & X_{12} & X_{23} & X_{34}\end{array} \right).\]

Let again $I_{m-1}^*$ denote the initial ideal of the ideal of maximal minors of an 
$m \times n$ matrix $X=(X_{ij})$ of indetermiantes
and $\Delta_{m-1}$ be the simplicial complex with Stanley-Reisner ideal $I_{m-1}^*$.
We denote the Alexander dual of the simplicial complex $\Delta_{m-1}$ by $\Delta_{m-1}^{\vee}$ and the 
corresponding Stanley-Reisner ideal by
$({I_{m-1}^{*}})^{\vee}$. Let $Y=(Y_{ij})$ be a dual matrix of $X$.
Let $J_{n-m}$ denote the ideal of the maximal minors of the matrix $Y$ and 
the initial ideal of $J_{n-m}$ be denoted by
$J_{n-m}^{*}$ (notice $J_{n-m}^*$ does not depend upon the
choice of the dual matrix $Y$). We define a polynomial ring
$T=K[X_{ij},Y_{kj}:\; 1\leq i\leq m, 1\leq k \leq n-m+1, 1\leq j\leq n ]$. Then we have:  
\begin{Theorem}\label{dual} $$({I_{m-1}^{*}})^{\vee}T=J_{n-m}^*T.$$
\end{Theorem}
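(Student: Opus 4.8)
The plan is to compare the minimal monomial generating sets of the two ideals: I will show that, after the identification of variables, both $({I_{m-1}^{*}})^{\vee}T$ and $J_{n-m}^*T$ are generated in $T$ by the \emph{same} set of squarefree monomials in the variables $X_{ij}$. Recall from the discussion preceding the theorem that, for a squarefree monomial $X_S$ with $S\subseteq[m]\times[n]$, one has $X_S\in G(({I_{m-1}^{*}})^{\vee})$ exactly when $S$ is a minimal vertex cover of $G(I_{m-1}^*)$. Since the maximal minors of $X$ form a Gr\"obner basis of $I_{m-1}$ with respect to the diagonal order $\tau$, the set $G(I_{m-1}^*)$ consists of the main diagonals $X_{1,c_1}X_{2,c_2}\cdots X_{m,c_m}$ with $1\le c_1<\cdots<c_m\le n$; identifying a variable with its coordinate point, $S$ is therefore a vertex cover of $G(I_{m-1}^*)$ if and only if $([m]\times[n])\setminus S$ contains no chain $(1,e_1)<(2,e_2)<\cdots<(m,e_m)$ in the partial order on $[m]\times[n]$ given by $(i,j)<(i',j')\iff i<i'$ and $j<j'$.

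First I would identify $G(J_{n-m}^*T)$. Applying the Gr\"obner basis statement to $Y$ (with a diagonal order), $J_{n-m}^*$ is generated by the main diagonals $Y_{1,c_1}Y_{2,c_2}\cdots Y_{n-m+1,c_{n-m+1}}$ over $1\le c_1<\cdots<c_{n-m+1}\le n$. For such a sequence one has $i\le c_i\le i+m-1$ for every $i$ (since $c_i\ge c_1+i-1\ge i$ and $c_i\le c_{n-m+1}-(n-m+1-i)\le m-1+i$), so each diagonal entry lies in the overlap region of the two matrices and equals $X_{\,c_i-i+1,\;c_i}$. Putting $p_i:=c_i-i+1$, the assignment $(c_i)_i\mapsto(p_i)_i$ is a bijection from the strictly increasing $(n-m+1)$-tuples in $[n]$ onto the weakly increasing $(n-m+1)$-tuples $1\le p_1\le\cdots\le p_{n-m+1}\le m$. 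Hence $J_{n-m}^*T$ is generated by the $\binom{n}{m-1}$ pairwise distinct squarefree monomials $X_{S_{\bold p}}$, where
\[
S_{\bold p}:=\{\,(p_i,\;p_i+i-1)\;:\;1\le i\le n-m+1\,\}
\]
and $\bold p$ runs over the weakly increasing $(n-m+1)$-tuples in $[m]$; in particular no ``new'' indeterminate of $Y$ occurs.

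Next I would prove that each $S_{\bold p}$ is a vertex cover of $G(I_{m-1}^*)$. Suppose not: then $([m]\times[n])\setminus S_{\bold p}$ contains a chain $(1,e_1)<\cdots<(m,e_m)$. Set $d_k:=e_k-k$; from $e_1<\cdots<e_m$ one gets that $(d_k)_{k\in[m]}$ is weakly increasing with values in $\{0,1,\dots,n-m\}$, and since the unique cell of $S_{\bold p}$ on the ``diagonal'' $e_k-k$ is $(p_{d_k+1},\,p_{d_k+1}+d_k)$, the condition $(k,e_k)\notin S_{\bold p}$ becomes $p_{d_k+1}\ne k$ for all $k\in[m]$. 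But $k\mapsto p_{d_k+1}$ is the composite of the weakly increasing maps $k\mapsto d_k$, $[m]\to\{0,\dots,n-m\}$, and $d\mapsto p_{d+1}$, $\{0,\dots,n-m\}\to[m]$, hence a weakly increasing self-map $f$ of $[m]$. Every weakly increasing $f\colon[m]\to[m]$ has a fixed point: if $k^\ast:=\max\{k:f(k)\ge k\}$ equals $m$ then $f(m)=m$, and otherwise $k^\ast\le f(k^\ast)\le f(k^\ast+1)\le k^\ast$ forces $f(k^\ast)=k^\ast$. This contradicts $p_{d_k+1}\ne k$ for all $k$, so $S_{\bold p}$ is a vertex cover.

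It remains to see these vertex covers are minimal and that there are no others. By Theorem \ref{minor}, $\Delta_{m-1}$ is a ball, hence pure of dimension $(m-1)(n+1)-1$, so every minimal generator of $({I_{m-1}^{*}})^{\vee}$ has degree $mn-(m-1)(n+1)=n-m+1=|S_{\bold p}|$; consequently each vertex cover $S_{\bold p}$ already has the minimum possible cardinality and is therefore a minimal vertex cover, so $X_{S_{\bold p}}\in G(({I_{m-1}^{*}})^{\vee})$. On the other hand, the number of minimal generators of $({I_{m-1}^{*}})^{\vee}$ equals the number of facets of $\Delta_{m-1}$, i.e.\ $e(S/I_{m-1}^*)=e(S/I_{m-1})$; since $\Delta_{m-1}$ is a linear ball (Corollary \ref{linear}) its $h$-vector is $h_i=\binom{n-m+i}{i}$ for $0\le i\le m-1$ and $h_i=0$ otherwise (as in the proof of Corollary \ref{linmul}), whence this multiplicity equals $\sum_{i=0}^{m-1}\binom{n-m+i}{i}=\binom{n}{m-1}$ --- exactly the number of distinct monomials $X_{S_{\bold p}}$. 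Therefore $G(({I_{m-1}^{*}})^{\vee})=\{X_{S_{\bold p}}\}=G(J_{n-m}^*T)$, and the two ideals of $T$ coincide. The main obstacle is the third step: recognizing that the failure of the covering condition forces a fixed point of a monotone self-map of $[m]$ (which cannot exist), together with the bookkeeping in the second step showing that the diagonal entries of the maximal minors of $Y$ are always genuine entries of $X$.
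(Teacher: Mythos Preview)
Your proof is correct. For the inclusion $J_{n-m}^*T\subseteq ({I_{m-1}^{*}})^{\vee}T$ you and the paper do the same thing: both show that each diagonal monomial of $Y$ gives a vertex cover of $G(I_{m-1}^*)$. The paper parametrizes a generator of $I_{m-1}^*$ by $(t_1,\ldots,t_m)$ with $t_j=e_j-j$ and argues inductively that $t_{i_k}>k-1$, reaching a contradiction at $k=n-m+1$; your fixed-point argument for the monotone self-map $k\mapsto p_{d_k+1}$ of $[m]$ is the dual reformulation of the same induction.

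The genuine difference is in the reverse inclusion. The paper argues directly: since every minimal vertex cover $S$ must meet each of the $n-m+1$ ``constant diagonals'' $\{(i,i+k):i\in[m]\}$, one can extract $S'=\{X_{i_k,i_k+k-1}\}$, and then asserts (``one may notice'') that the indices satisfy $i_1\le\cdots\le i_{n-m+1}$, so $X_{S'}\in G(J_{n-m}^*)$. You instead count: purity of $\Delta_{m-1}$ (Theorem \ref{minor}) forces all minimal vertex covers to have size $n-m+1$, so each $S_{\bold p}$ is minimal; linearity (Corollary \ref{linear}) pins down the $h$-vector and gives $e(S/I_{m-1}^*)=\binom{n}{m-1}$, which equals the number of weakly increasing $\bold p$'s. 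Your route is cleaner and makes good use of results already proved in the paper, at the cost of invoking those earlier theorems; the paper's direct argument is self-contained but leaves the monotonicity of the $i_k$ unproved (it can be justified: if $i_k>i_{k+1}$, the chain with $t_j=k-1$ for $j<i_k$ and $t_j=k$ for $j\ge i_k$ is uncovered).
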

\begin{proof} First we show
that the ideal $J_{n-m}^*T$ is contained in the ideal
$(I_{m-1}^{*})^{\vee}T$. 
Let $g=Y_{1j_1}Y_{2j_2}\cdots Y_{n-m+1,j_{n-m+1}}, \;j_1<j_2<\cdots<j_{n-m+1}$ be 
a minimal generator of the ideal $J_{n-m}^*$. As 
$Y_{1j}=X_{jj}, Y_{2j+1}=X_{jj+1}, \ldots,Y_{n-m+1,j+n-m}=X_{jj+n-m}$ for $j=1,\ldots,m$,
the monomial $g$ is of the form $X_{i_1,i_1}X_{i_2,i_2+1}\cdots X_{i_{n-m+1},i_{n-m+1}+n-m}$ for some
$ 1\leq i_1\leq i_2\leq\cdots\leq i_{n-m+1}\leq m.$ We need to show that the set
$S$ given by $\{X_{i_1,i_1},X_{i_2,i_2+1},\ldots, X_{i_{n-m+1},i_{n-m+1}+n-m}\}$ is a vertex 
cover for $G(I_{m-1}^{*})$. Let 
$$h=X_{1,1+t_1}X_{2,2+t_2}\cdots X_{m,m+t_m}, \;0\leq t_1\leq t_2\leq \cdots\leq t_m \leq n-m$$ 
be a minimal generator of $I_{m-1}^*$. 
We show that there exists $X_{i,j}\in S$ such that $X_{i,j}|\;h$. Suppose the contrary, then
$X_{i_k,i_{k}+(k-1)}$
does not divide $h$ for any $k=1,\ldots,n-m+1$ which implies $t_{{i_k}} >k-1$ for $k=1,\ldots,n-m+1$,
in particular $t_{i_{n-m+1}} >n-m$ which is a contradiction.

To show that $(I_{m-1}^{*})^{\vee}T\subset J_{n-m}^*T$, we need to show that if $S$ is a minimal 
vertex cover of $G(I_{m-1}^*)$, then $\prod_{X_{ij}\in S}X_{ij}$ is a generator of $J_{n-m}^*$. 
Since, the monomials $\prod_{i=1}^{m}{X_{i,i+k}},\; k=0,\ldots,n-m$ are minimal generators of
$G(I_{m-1}^*)$, we get that 
the subset of the form $S'=\{X_{i_1,i_1},X_{i_2,i_2+1},\ldots,X_{i_{n-m+1},i_{n-m+1}+n-m}\}$ 
is contained in any minimal vertex cover $S$ of $G(I_{m-1}^*)$. Also one may notice that, we must have
$1\leq i_1\leq i_2\leq\cdots\leq i_{n-m+1}\leq m$. Now, the generators of $J_{n-m}^*$
are exactly of the form $\prod_{X_{ij}\in S'}X_{ij}$, hence $(I_{m-1}^{*})^{\vee}T\subset J_{n-m}^*T$.
\end{proof}

\begin{Corollary} The Stanley Reisner Ideal $I_{m-1}^{*}$ has linear quotients.
\end{Corollary}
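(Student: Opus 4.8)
The plan is to derive the corollary from Theorem~\ref{dual} together with the classical duality, due to Herzog and Hibi, between shellability and linear quotients: if a simplicial complex $\Gamma$ is shellable with shelling $G_{1},\dots,G_{s}$, then the minimal generators $X_{P_{G_{1}}},\dots,X_{P_{G_{s}}}$ of the Stanley--Reisner ideal $I_{\Gamma^{\vee}}$ of the Alexander dual, listed in this order, realize linear quotients. Because Alexander duality is an involution, it is enough to produce a shelling of $\Delta_{m-1}^{\vee}$; then $I_{(\Delta_{m-1}^{\vee})^{\vee}}=I_{\Delta_{m-1}}=I_{m-1}^{*}$ has linear quotients, which is the assertion. (Every minimal generator of $I_{m-1}^{*}$ has degree $m$, so this is the genuinely linear notion, consistent with Corollary~\ref{linear}.)

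So the remaining task is to shell $\Delta_{m-1}^{\vee}$, and here I would use Theorem~\ref{dual}. By that theorem, after the identification $Y_{i,j+i-1}=X_{j,j+i-1}$ defining the dual matrix, $(I_{m-1}^{*})^{\vee}$ coincides with $J_{n-m}^{*}$, the initial ideal of the ideal of maximal minors of the $(n-m+1)\times n$ dual matrix $Y$. Thus $(I_{m-1}^{*})^{\vee}$ is, up to this renaming, the Stanley--Reisner ideal of the complex $\Delta_{n-m}(Y)$ obtained by applying the construction of this section to $Y$ in place of $X$ with $r=n-m$, and by Theorem~\ref{minor} (applied to $Y$) that complex is a shellable ball. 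The only thing to reconcile is that $\Delta_{m-1}^{\vee}$ sits on the $m\times n$ grid whereas $\Delta_{n-m}(Y)$ sits on the $(n-m+1)\times n$ grid; but the indeterminates that occur in no minimal generator of a Stanley--Reisner ideal are precisely the cone points of the complex, and an iterated cone over a complex $\Delta'$ is shellable exactly when $\Delta'$ is. Deleting all cone points from either side leaves the same complex, namely the one on the vertices $X_{a,b}$ with $0\le b-a\le n-m$; hence $\Delta_{m-1}^{\vee}$ is an iterated cone over a shellable complex and is therefore shellable. Transporting a shelling of $\Delta_{m-1}^{\vee}$ through Alexander duality as above then orders the minimal generators of $I_{m-1}^{*}$ with linear quotients.

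The step requiring the most care is the middle one: one must check that, after the substitution of Theorem~\ref{dual}, the indeterminates discarded on each side really are cone points of the respective complexes, so that un-coning legitimately carries shellability from $\Delta_{n-m}(Y)$ back to $\Delta_{m-1}^{\vee}$; and one should make sure that the version of the Herzog--Hibi theorem invoked is the one valid for pure shellable complexes, which $\Delta_{m-1}^{\vee}$ is, since all of its facets have cardinality $n-m+1$.
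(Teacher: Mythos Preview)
Your argument is correct and follows the same route as the paper: use Theorem~\ref{dual} to identify $(I_{m-1}^{*})^{\vee}$ with the initial ideal $J_{n-m}^{*}$ of the maximal minors of the dual matrix $Y$, invoke Theorem~\ref{minor} for $Y$ to obtain shellability of $\Delta_{m-1}^{\vee}$, and then appeal to the duality between shellability and linear quotients (the paper cites \cite[Theorem~1.4]{HHZ}). Your discussion of cone points is in fact a detail that the paper leaves implicit when asserting that $\Delta_{m-1}^{\vee}$ is shellable, so you have made the passage between the two vertex sets rigorous. One small slip: the facets of $\Delta_{m-1}^{\vee}$ on the full $m\times n$ grid have cardinality $mn-m$, not $n-m+1$ (the latter is the facet size of the de-coned complex, equivalently the generating degree of $(I_{m-1}^{*})^{\vee}$); this does not affect the argument, since purity is all that is needed.
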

\begin{proof}  By above theorem and Theorem \ref{minor} we get
that the  simplicial complex $\Delta_{m-1}^{\vee}$ gives the triangulation of 
a shellable linear ball. Now it follows from Theorem 1.4 \cite{HHZ} that $I_{m-1}^*$ has linear quotients.
\end{proof}

\section{Polarization of the powers of a maximal ideal}

Let $S=K[x_1,\ldots,x_n]$ be a standard graded polynomial ring over the field $K$ and
let $\mm=(x_1,\ldots,x_n) \subset S$ denote the maximal graded ideal.

Let $u=\prod_{i=1}^{n} x_i^{a_i}$ be a monomial in $S$. Then the squarefree monomial given by
$$u^P=\prod_{i=1}^{n}\prod_{j=1}^{a_i}x_{ij}\in K[x_{11},\ldots,x_{1a_1},\ldots,x_{n1},\ldots,x_{na_n}]$$
is called the {\it polarization } of $u$.  Let $I=\mm^t$ be the $t$th power of the maximal ideal.
 Let $G(I)=\{u_1,\ldots,u_m\}$, then the
squarefree monomial ideal $I^P=(u_1^P,\ldots,u_m^P)
\subset K[x_{11},\ldots,x_{1t},\ldots,x_{n1},\ldots,x_{nt}]$ 
is called the {\it polarization} of $I$.
 
Let $\Gamma=\{a\in\NN^n\;:\; x^a\notin I \}$ be the multicomplex associated to the ideal $I$. The detailed 
information about multicomplexes can be found in \cite{HP}. In our case, $\Gamma$ is a shellable 
multicomplex, see \cite[Theorem 10.5]{HP} and all the elements of $\Gamma$
are its facets. Clearly, $\Gamma$ consists of those $a\in \NN^n$ such that $\sum a(k)\leq t-1$.
We define a partial order on the facets of $\Gamma$ as follows:
Let $a,b$ be any two facets of $\Gamma$,  we say $a <b$ if 
$\sum_{k=1}^{n}a(k) \leq \sum_{k=1}^{n}b(k)$. This partial order extended to any total 
order gives us a shelling. We fix
a total order and we call the respective shelling $\Sigma$. Let $\mathcal{F}(\Gamma)=
\{a_1,\ldots,a_m\}$ be the set of the facets of $\Gamma$
in the shelling order $\Sigma$. Let $\Delta$ be the simplicial complex with the
Stanley-Reisner ideal $I^P$ and let $\mathcal{F}(\Delta)$
be the set of facets of $\Delta$. By \cite{D}, it follows that $\Delta$ is shellable.
Furthermore by \cite[Lemma 3.7]{jahan} and \cite[Proposition 10.3]{HP}
together, it follows that there is a bijection between $\mathcal{F}(\Gamma)$
and $\mathcal{F}(\Delta)$ given by 
$$\theta: \mathcal{F}(\Gamma) \to \mathcal{F}(\Delta),\; a_k \mapsto F_{a_k}. $$
Here given the facet
$a_k=\big(a_k(1),\ldots,a_k(n)\big)$ of $\Gamma$, the facet  
$F_{a_k}$ of $\Delta$ is defined  to be $\{x_{ij},\; i=1,\ldots,n, j=1,\ldots,t, j\neq a_k(i)+1\}$. 
Also, $F_{a_1},\ldots,F_{a_m}$ is a shelling order of the facets of the simplicial complex $\Delta$.

We have the following:

\begin{Theorem}\label{polar}  The geometric realization $|\Delta|$   of
the simplicial complex  $\Delta$ is a shellable linear ball.
\end{Theorem}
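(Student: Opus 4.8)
The plan is to show two things: that $|\Delta|$ is a ball (of the correct dimension), and that the Stanley--Reisner ideal $I^P$ has a linear resolution. The ball part will be proved exactly as in Theorem~\ref{minor}, by induction along the shelling $F_{a_1},\ldots,F_{a_m}$ coming from the bijection $\theta$. Set $\Delta_i = \langle F_{a_1},\ldots,F_{a_i}\rangle$. Assuming $|\Delta_{i-1}|$ is a ball, write $\Delta_i = \Delta_{i-1}\cup\langle F_{a_i}\rangle$ and $\Delta_{i-1}\cap\langle F_{a_i}\rangle = \langle G_1,\ldots,G_s\rangle$; since $\Delta$ is shellable each $G_j$ is a codimension-one face of $F_{a_i}$. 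To invoke Lemma~\ref{topology} I must check (1) each $G_j$ lies in a unique earlier facet, so $G_j\in\partial\Delta_{i-1}$, and (2) $\langle G_1,\ldots,G_s\rangle$ is a \emph{proper} subcomplex of $\partial\langle F_{a_i}\rangle$, i.e. there is some vertex $v\in F_{a_i}$ with $F_{a_i}\setminus\{v\}$ not equal to any $F_{a_j}\cap F_{a_i}$ with $j<i$. Point (1) is the usual shelling property. For point (2) I would use the explicit description $F_{a_k}=\{x_{ij}: j\neq a_k(i)+1\}$: the facets $G_j$ deleted when $F_{a_i}$ is attached correspond to ``lowering'' one coordinate $a_i(\ell)$ that is positive, and since at least one coordinate of $a_i$ can instead be \emph{raised} (as long as $\sum_k a_i(k)<t-1$) or, at the top level, since $a_i=(t-1,0,\ldots,0)$-type facets still admit a vertex deletion producing a face in no earlier facet, there is always a free boundary facet of $\langle F_{a_i}\rangle$. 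This mirrors the role of non-corner vertices in Theorem~\ref{minor}, so the bookkeeping should be routine once the dictionary between $\Gamma$ and $\Delta$ is set up.

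For the linearity, the clean route is: $\mm^t$ has a linear resolution over $S$, and polarization preserves graded Betti numbers, so $I^P$ has a linear resolution over the larger polynomial ring $K[x_{ij}]$. Concretely, $\beta_{i,j}(S/\mm^t)=\beta_{i,j}(K[x_{ij}]/I^P)$ for all $i,j$ because $I^P$ is obtained from $\mm^t$ by a sequence of variable-splitting steps, each of which is a regular-element quotient and hence does not change Betti numbers (see e.g. the standard polarization lemma). Since $\mm^t$ is generated in degree $t$ with $t$-linear resolution, so is $I^P$; thus $|\Delta|$ is a linear ball by definition. Alternatively, and perhaps more in the spirit of this paper, one could argue that $\Delta$ is shellable (already known, via \cite{D}) and that the $h$-vector / Betti degrees forced by the combinatorics of $\Gamma$ are exactly those of a linear resolution — but invoking polarization-invariance of Betti numbers is shorter and self-contained.

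The main obstacle is the verification of condition (2) of Lemma~\ref{topology} in full generality, i.e. showing that \emph{every} partial union $\Delta_i$ attaches $F_{a_i}$ along a proper part of its boundary. This is where the specific combinatorial structure of the multicomplex $\Gamma$ (all facets are the ``staircase'' tuples with $\sum a(k)\le t-1$, ordered by total degree) really has to be used: I would need to identify, for each facet $a_i$, a vertex $x_{i\ell}\in F_{a_i}$ whose removal yields a face not contained in any $F_{a_j}$ with $j<i$, and argue this cleanly from the shelling order (lower total degree first) — essentially an analogue of Lemma~\ref{late} for this polarized setting. Once that lemma is in place, the induction runs and the theorem follows by combining it with the linearity statement above.
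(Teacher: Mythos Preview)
Your overall strategy is exactly the paper's: linearity via ``polarization preserves graded Betti numbers,'' and the ball property by induction along the shelling using Lemma~\ref{topology}. The linearity part is fine and matches the paper verbatim.

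There is one genuine gap. You write ``Point (1) is the usual shelling property,'' but shellability only tells you that $\Delta_{i-1}\cap\langle F_{a_i}\rangle$ is generated by codimension-one faces; it does \emph{not} tell you that each such $G_\ell$ lies in a \emph{unique} earlier facet. That uniqueness is precisely what forces $G_\ell\in\partial\Delta_{i-1}$, and it must be checked directly from the combinatorics of the polarization, just as Lemma~\ref{late} does in the determinantal case. The paper carries this out explicitly: writing $a_k=(s_1,\ldots,s_n)$ and $G_\ell=F_{a_k}\setminus\{x_{i_\ell j_\ell}\}$, any facet $F_{a_p}$ containing $G_\ell$ must satisfy $a_p(i)=s_i$ for $i\neq i_\ell$ and $a_p(i_\ell)\in\{s_{i_\ell},\,j_\ell-1\}$; since $a_p\neq a_k$ one gets $a_p=(s_1,\ldots,j_\ell-1,\ldots,s_n)$, which is uniquely determined. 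So the ``analogue of Lemma~\ref{late}'' you mention at the end is needed for condition~(1) as well as~(2), not just~(2).

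For condition~(2) your intuition is right and the paper makes it precise in one line: take the least $q$ with $s_q<t-1$ and set $G=F_{a_k}\setminus\{x_{qt}\}$. By the same computation as above, any facet containing $G$ other than $F_{a_k}$ must have $q$-th coordinate equal to $t-1$, hence strictly larger total degree than $a_k$, so it cannot occur earlier in the shelling (and may fail to lie in $\Gamma$ at all). This is your ``raising a coordinate'' idea; there is no separate case analysis needed at the top level.
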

\begin{proof} We already know that $\Delta=\langle F_{a_1},\ldots,F_{a_m}\rangle$ 
is a shellable simplicial complex. Note that the Stanley-Reisner ideal 
$I_{\Delta}=I^P$ has a linear resolution
because the graded Betti numbers of a monomial ideal and its polarization are the same, and $I=\mm^t$
obviously has a linear resolution.
Let $\Delta_k=\langle F_{a_1},\ldots,F_{a_k}\rangle$. 
We will prove $|\Delta_k|$ is a ball by induction on $k$ as in Theorem\ref{minor}.
The assertion is obvious for $k=1$. 
Assume that $|\Delta_{k-1}|$ is a ball, we will
show that $|\Delta_{k}|$ is a ball where the simplicial complex
$\Delta_{k}=\Delta_{k-1}\cup \langle F_{a_{k}} \rangle$. 
Let $\Delta_{k-1}\cap \langle F_{a_{k}}\rangle=\{G_1,\ldots,G_r\}$
where $G_1,\ldots,G_r$ are codimension one faces of $F_{a_{k}}$.
By Lemma \ref{topology}, we notice that $|\Delta_{k}|$ is a ball (assuming that $|\Delta_{k-1}|$ 
is a ball) if the following two conditions are satisfied:

\begin{enumerate}
\item Each $G_{\ell}$ is a subset of exactly one $F_{a_i}$ for $ i\leq k-1$, which in turn implies
that $G_{\ell} \in \partial{\Delta_{k-1}}$,
\item $G_1, \ldots, G_r$ is a proper subset of the boundary complex 
$\partial F_{a_{k}}$ of $F_{a_{k}}$.
\end{enumerate}
Let $a_{k}=(s_1,\ldots,s_n)$ where $\sum s_i\leq t-1$. Then 
$$F_{a_{k}}=\{x_{ij},\; i=1,\ldots,n, j=1,\ldots,t, j\neq s_i+1\}.$$ Suppose 
$G_{\ell}=F_{a_{k}}\setminus\{x_{i_{\ell}j_{\ell}}\}$ where $1\leq i_{\ell}\leq n$
and $1\leq j_{\ell}\leq t$.
Then clearly, $G_{\ell}=F_{a_{k}}\cap F_{a_{p_\ell}}$  where 
$a_{p_\ell}=(s_1,\ldots,s_{i_{\ell}-1},j_{\ell}-1,s_{i_{\ell}+1},\ldots,s_n)$ and also 
$G_{\ell} \not\subset F_{a_q}$ for any $q\leq k-1$,\;$q\neq {p_\ell}$.

For the second condition, let
$1 \leq q \leq n$ be the minimum integer such that $s_{q}< t-1$. 
Let $G=F_{a_{k}}\setminus \{x_{qt}\}$. Suppose $G\subset F_{a_j}$
 for some $j\leq k-1$, then it would imply that 
 $a_j=(s_1,\ldots,s_{q-1},t-1,s_{q+1},\ldots,s_n)$. Since 
 $\sum a_j(i)\geq t$, we have $a_j \notin \Gamma$, a contradiction.
 Hence $G \notin \{G_1,\ldots,G_r\}$ and $G$ is a facet of the boundary complex $\partial F_{a_k}$.

\end{proof}

Now by the above theorem and Corollary \ref{linmul}, we have the following:
\begin{Corollary} \label{polmul} The simplicial sphere $\partial\Delta$ 
satisfies the multiplicity conjecture.
\end{Corollary}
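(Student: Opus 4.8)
The plan is to read Corollary~\ref{polmul} directly off the two results that precede it. By Theorem~\ref{polar} the simplicial complex $\Delta$ whose Stanley--Reisner ideal is the polarization $I^{P}$ of $I = \mm^{t}$ is a shellable linear ball, and Corollary~\ref{linmul} asserts exactly that the boundary sphere of any linear ball satisfies the multiplicity conjecture (its proof verifies (A1) and (A2) and then quotes Theorem~\ref{mul}). So there is no new argument to produce: one simply applies Corollary~\ref{linmul} to the ball supplied by Theorem~\ref{polar}, which gives $L \le e(S/I_{\partial\Delta}) \le U$.

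The only point worth spelling out is that the standing hypotheses of Section~1, under which Theorem~\ref{mul} and hence Corollary~\ref{linmul} are proved, really do hold for this $\Delta$. These are: (i) every vertex of $\Delta$ belongs to $\partial\Delta$, so that $\partial\Delta$ is a $(d-2)$-sphere on the full vertex set; and (ii) the inequality $2 \le m \le [(d+1)/2]$, where $m$ is the least degree of a generator of $I_{\Delta}$ and $d = \dim K[\Delta]$. Polarization leaves the generating degrees unchanged, so $m = t$; and since $\dim S/\mm^{t} = 0$ while polarization introduces $n(t-1)$ new variables, $d = n(t-1)$. Thus (ii) reads $2 \le t \le [(n(t-1)+1)/2]$, valid for $t \ge 2$ and $n \ge 3$; the degenerate cases ($t \le 1$, or very small $n$, where $\Delta$ is void, irrelevant, or is itself a sphere rather than a ball) lie outside the intended range.

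For (i) I would argue from the explicit combinatorics recalled just before Theorem~\ref{polar}: the facets of $\Delta$ are the sets $F_{a_k} = \{x_{ij} : j \ne a_k(i)+1\}$ indexed by the facets $a_k$ of the multicomplex $\Gamma = \{a \in \NN^{n} : \sum_{k=1}^{n} a(k) \le t-1\}$, with $F_{a_1},\ldots,F_{a_m}$ a shelling. Given any vertex $x_{pq}$, one produces a facet $F_{a_k}$ containing it together with a codimension-one subface $F_{a_k} \setminus \{x_{pq}\}$ contained in no other facet --- the same ``free ridge'' device used inside the proof of Theorem~\ref{polar} for $G = F_{a_k} \setminus \{x_{qt}\}$ --- which forces $x_{pq}$ onto a boundary ridge. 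I anticipate no genuine obstacle here: all the real content sits in Theorem~\ref{polar} (the ball property) and Corollary~\ref{linmul} (the multiplicity estimates), and what remains is the routine bookkeeping of (i) and (ii). Once those are in place, Corollary~\ref{linmul} applies verbatim and yields the assertion.
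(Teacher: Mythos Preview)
Your proposal is correct and follows exactly the paper's approach: the paper's entire argument is the single sentence ``Now by the above theorem and Corollary~\ref{linmul}, we have the following,'' and you reproduce this, with the welcome addition of checking the standing hypotheses $2\le m\le[(d+1)/2]$ and that every vertex lies on $\partial\Delta$, which the paper leaves implicit.
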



\begin{thebibliography}{99}

\bibitem{BH} W.\ Bruns and J.\ Herzog, ``Cohen--Macaulay
rings'',  Revised Edition, Cambridge University Press,
Cambridge, 1996.
\bibitem{BAH} W.\ Bruns and J.\ Herzog, On the computation of $a$-invariants,
{\it Manuscripta Math.}, {\bf 77}(1992), 201-213.
\bibitem{bruns} W.\ Bruns and T.\ Hibi, Stanley-Reisner rings with pure resolutions,
{\it Communications in Algebra}, {\bf 23}(4), (1995), 1201-1217. 
\bibitem{BV} W.\ Bruns and U.\  Vetter, ``Determinantal rings'', Lecture 
 Notes in Mathematics, {\bf 1327}, Springer 1988.
 \bibitem{CO} Aldo Conca, Gr\"obener bases of powers of ideals of maximal minors,
  {\it Journal of Pure and Applied Algebra}, {\bf 121}(1997), 223-231.
\bibitem{D} Dress, A. : A new algebraic criterion for shellability. Beitr.Algebr.Geom.
{\bf 340}(1), (1993), 45-55. 
\bibitem{E} David Eisenbud, ``Commutative Algebra with a View Toward Algebraic Geometry'',
Springer-Verlag.
\bibitem{Hibi} Takayuki Hibi, ``Algebraic Combinatorics on Convex Polytopes'',
Carslaw Publications.
\bibitem{HP} J.\ Herzog and D.\ Popescu, Finite filterations of modules and
shellable multicomplexes, {\it Manuscripta Math.}, {\bf 121}(2006), 385-410.
\bibitem{HS} J.\ Herzog and H.\ Srinivasan, Bounds for multiplicities, {\it Transactions
 of the American Mathematical Society}, {\bf 350}(7), (1998), 2879-2902.
\bibitem{HT} J.\ Herzog and N.\ V.\ Trung, Gr\"obner bases and multiplicity
of determinantal and pfaffian ideals, {\it Advances in Mathematics}, {\bf 96}(1992), 1-37.
\bibitem{HHZ} J.\ Herzog, T.\ Hibi and X.\ Zheng, Dirac's theorem on chordal graphs and
Alexander duality, {\it European Journal of Combinatorics}, {\bf 25}(2004), 949-960.
\bibitem{HZ} J.\ Herzog, X.\ Zheng, Notes on the multiplicity conjecture,{\it Collect. Math.},
{\bf 57}(2), (2006), 211-226.
\bibitem{jahan} Ali Soleyman Jahan, Prime filterations of monomial ideals and polarizations,
{\it Journal of Algebra},{\bf 312}(2), (2007), 1011-1032.
\bibitem{welker} M.\ Kubitzke and V.\ Welker, The multiplicity conjecture for barycentric subdivisions,
{\it arXiv : math. $AC/0606274$}.
\bibitem{tim} J.\ Migliore, U.\ Nagel, T.\ R\"omer, Extensions of the multiplicity conjecture,
{\it to appear in Trans. Amer. Math. Soc.} 
\bibitem{novik} I.\ Novik, E.\ Swartz, Face ring multiplicity via CM-connectivity sequences,
{\it to appear in Canadian Journal of Mathematics.}
\bibitem{ST} Richard P. Stanley, ``Combinatorics and Commutative Algebra'', Second Edition, Birkh\"auser.
\bibitem{Strum} B. Sturmfels, 
``Gr\"obner bases and Stanley decompositions of determinantal rings'',
{\it Math. Z.}, {\bf 205}(1990), 137-144.
\bibitem{TH} N.\ Terai and T.\ Hibi, Computation of Betti numbers of monomial ideals
associated with cyclic polytopes, { \it Discrete Comput. Geom.}, {\bf 15}(1996), 287-295.
\bibitem{WM} William S. Massey, ``Algebraic Topology: An Introduction'', Harcourt, Brace and World, Inc.
\end{thebibliography}
\end{document}